\pdfoutput=1
\RequirePackage{ifpdf}
\ifpdf 
\documentclass[pdftex]{sigma}
\else
\documentclass{sigma}
\fi

\numberwithin{equation}{section}

\newtheorem{Theorem}{Theorem}[section]
\newtheorem*{Theorem*}{Theorem}
\newtheorem{Corollary}[Theorem]{Corollary}
\newtheorem{Lemma}[Theorem]{Lemma}

 { \theoremstyle{definition}
\newtheorem{Definition}[Theorem]{Definition}

 }

\newcommand{\fr}{\frac}

\newmuskip\pFqskip
\pFqskip=6mu
\mathchardef\pFcomma=\mathcode`, 
\newcommand*\pFq[5]{%
 \begingroup
 \begingroup\lccode`~=`,
 \lowercase{\endgroup\def~}{\pFcomma\mkern\pFqskip}%
 \mathcode`,=\string"8000
 {}_{#1}\phi_{#2}\biggl[\genfrac..{0pt}{}{#3}{#4};#5\biggr]%
 \endgroup
 }
 \newmuskip\pGqskip
\pGqskip=6mu
\mathchardef\pGcomma=\mathcode`, 

\begin{document}
\allowdisplaybreaks

\newcommand{\arXivNumber}{2503.10890}

\renewcommand{\thefootnote}{}

\renewcommand{\PaperNumber}{056}

\FirstPageHeading

\ShortArticleName{Positive Weighted Partitions Generated by Double Series}

\ArticleName{Positive Weighted Partitions Generated\\ by Double Series\footnote{This paper is a~contribution to the Special Issue on Basic Hypergeometric Series Associated with Root Systems and Applications in honor of Stephen C.~Milne's 75th birthday. The~full collection is available at \href{https://www.emis.de/journals/SIGMA/Milne.html}{https://www.emis.de/journals/SIGMA/Milne.html}}}

\Author{George E. ANDREWS~$^{\rm a}$ and Mohamed EL BACHRAOUI~$^{\rm b}$}

\AuthorNameForHeading{G.E.~Andrews and M.~El Bachraoui}

\Address{$^{\rm a)}$~The Pennsylvania State University, University Park, Pennsylvania 16802, USA}
\EmailD{\href{mailto:andrews@math.psu.edu}{andrews@math.psu.edu}}

\Address{$^{\rm b)}$~United Arab Emirates University, PO Box 15551, Al-Ain, United Arab Emirates}
\EmailD{\href{mailto:melbachraoui@uaeu.ac.ae}{melbachraoui@uaeu.ac.ae}}

\ArticleDates{Received March 14, 2025, in final form July 04, 2025; Published online July 12, 2025}

\Abstract{We investigate some weighted integer partitions whose generating functions are double-series. We will establish closed formulas for these $q$-double series and deduce that their coefficients are non-negative. This leads to inequalities among integer partitions.}

\Keywords{partitions; $q$-series; positive $q$-series}

\Classification{11P81; 05A17; 11D09}

\begin{flushright}
\textit{In honor of Steve Milne's 75th birthday}
\end{flushright}

\renewcommand{\thefootnote}{\arabic{footnote}}
\setcounter{footnote}{0}

\section{Introduction}
Throughout, let $q$ denote a complex number satisfying $|q|<1$
and let $m$ and $n$ denote nonnegative integers.
We adopt the following standard notation from the theory of $q$-series~\cite{Andrews, Gasper/Rahman}
\begin{gather*}
(a;q)_0 = 1,\qquad v (a;q)_n = \prod_{j=0}^{n-1} \bigl(1-aq^j\bigr),\qquad
(a;q)_{\infty} = \prod_{j=0}^{\infty} \bigl(1-aq^j\bigr),
\\
(a_1,\ldots,a_k;q)_n = \prod_{j=1}^k (a_j;q)_n,\qquad \text{and}\qquad
(a_1,\ldots,a_k;q)_{\infty} = \prod_{j=1}^k (a_j;q)_{\infty}.
\end{gather*}
We shall need the following basic facts
\begin{equation}
(a;q)_{n+m} = (a;q)_{m} (aq^{m};q)_n\qquad \text{and}\qquad
(a;q)_{\infty} = (a;q)_n (aq^n;q)_{\infty}.\label{basic}
\end{equation}
In this paper, we consider certain $q$-double series in one single variable
which turn out to be natural generating functions for weighted integer partitions.

Weighted integer partitions have been extensively studied in the past. A first systematic investigation of identities for weighted partitions is due to
Alladi~\cite{Alladi1997, Alladi1998}. For other references on weighted partitions and their applications,
see, for instance,~\cite{Garvan2018,Fokkink/Wang:1995}.

A power series $\sum_{n\geq 0} a_n q^n$ is called positive, written $\sum_{n\geq 0} a_n q^n \succeq 0$,
if $a_n \geq 0$ for any nonnegative integer $n$. Accordingly, we will write
$\sum_{n\geq 0} a_n q^n \succeq \sum_{n\geq 0} b_n q^n$ to mean that
$\sum_{n\geq 0} a_n q^n - \sum_{n\geq 0} b_n q^n \succeq 0$.
Positivity results for $q$-series have been intensively studied in the past to some extent in connection with
Borwein's famous positivity conjecture~\cite{Andrews1995}.
For more on this, see, for instance,~\cite{Berkovich/Warnaar:2005, Bressoud1996, Wang2022, Warnaar/Zudilin:2011}.
Positivity results for alternating sums have also received much attention in recent years, see
for example~\cite{Andrews/Merca:2012, Andrews/Merca:2018, Bachraoui2020, Guo/Zeng:2013}.

An important application of positivity results is the fact that positive series which are generating
functions for weighted partitions give rise to inequalities of integer partitions. About the
interplay between positive $q$-series and inequalities of integer partitions, we refer the reader
to~\cite{Andrews2013, Berkovich/Grizzell:2013, Berkovich/Grizzell:2014, Berkovich-Uncu, Laughlin2016}.

Our main goal in this work is to prove that certain $q$-double series are positive. As these series
turn out to be generating functions for weighted partitions, our results
yield inequalities of integer partitions.

The paper is organized as follows. In Section~\ref{sec results}, we introduce our series through the partitions they generate and we state our main
results. In Section~\ref{sec lemmas}, we collect the lemmas needed to prove the main theorems.
Sections~\ref{sec proof double-1}--\ref{sec proof double-3} are devoted to the proofs of the main results and their corollaries.
\section{Main results}\label{sec results}
\begin{Definition}\label{def-1}
For any positive integer $N$,
let $F_1(N)$ be the number of partitions of $N$,
where if the partition has $n$ ones then the largest part is $2n+2k+1$ for some $k$ and all parts $>1$
are in the interval $[2n+2,2n+2k+1]$, no even parts are repeated, and the partition is counted
with weight $(-1)^j$, where $j$ is the number of even parts.
Then we have
\[
\sum_{n\geq 0} F_1(n) q^n
=\sum_{k,n\geq 0} \fr{\bigl(q^{2n+2};q^2\bigr)_k}{\bigl(q^{2n+3};q^2\bigr)_k}q^{2k+3n+1}.
\]
\end{Definition}
We now state our first main result.
\begin{Theorem}\label{thm double-1}
We have
\[
\sum_{n\geq 0} F_1(n) q^n
=\fr{1}{\bigl(1-q^2\bigr)}\fr{\bigl(q^2;q^2\bigr)_\infty^2}{\bigl(q;q^2\bigr)_\infty^2}
-\fr{\bigl(1+q^2\bigr)}{(1-q)\bigl(1-q^3\bigr)}.
\]
\end{Theorem}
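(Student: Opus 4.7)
The plan is to evaluate the double series in closed form by first computing the inner sum over $k$ and then summing the resulting single-index series over $n$. Write
\[
S_n := \sum_{k\geq 0} \fr{\bigl(q^{2n+2};q^2\bigr)_k}{\bigl(q^{2n+3};q^2\bigr)_k} q^{2k},
\]
so that the left-hand side of the theorem equals $q\sum_{n\geq 0} q^{3n} S_n$. The idea is to use a Heine transformation together with the $q$-binomial theorem to close $S_n$, and then a further application of the $q$-binomial theorem when summing over $n$.

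For $S_n$, I would apply Heine's second transformation with base $q^2$ and parameters $a=q^{2n+2}$, $b=q^2$, $c=q^{2n+3}$, $z=q^2$. Since $(q^{2n+1};q^2)_\infty/(q^{2n+3};q^2)_\infty = 1-q^{2n+1}$, the prefactor collapses to $(1-q^{2n+1})/(1-q^2)$ and the transformed sum becomes $\sum_k \fr{(q^3;q^2)_k}{(q^4;q^2)_k} q^{(2n+1)k}$. Rewriting the ratio of Pochhammer symbols as $(1+q)(q;q^2)_{k+1}/(q^2;q^2)_{k+1}$ and shifting $k\mapsto k-1$ reduces this to a tail of $\sum_{j\geq 0}\fr{(q;q^2)_j}{(q^2;q^2)_j} q^{(2n+1)j}$, which the $q$-binomial theorem (with $a=q$, $z=q^{2n+1}$) evaluates as $(q^{2n+2};q^2)_\infty/(q^{2n+1};q^2)_\infty$. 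After tidying with $(q^{2n+1};q^2)_\infty = (1-q^{2n+1})(q^{2n+3};q^2)_\infty$, this yields
\[
S_n = \fr{q^{-(2n+1)}}{1-q}\left[\fr{\bigl(q^{2n+2};q^2\bigr)_\infty}{\bigl(q^{2n+3};q^2\bigr)_\infty} - 1 + q^{2n+1}\right].
\]

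Next I would multiply by $q^{3n+1}$ and sum over $n$. The correction terms $-1$ and $q^{2n+1}$ produce the geometric series $-1/(1-q)$ and $q/(1-q^3)$. The principal sum
\[
U := \sum_{n\geq 0} q^n\,\fr{\bigl(q^{2n+2};q^2\bigr)_\infty}{\bigl(q^{2n+3};q^2\bigr)_\infty}
\]
simplifies via the factorisation $\fr{(q^{2n+2};q^2)_\infty}{(q^{2n+3};q^2)_\infty} = \fr{(1-q)(q^2;q^2)_\infty (q^3;q^2)_n}{(q;q^2)_\infty (q^2;q^2)_n}$ to $\fr{(1-q)(q^2;q^2)_\infty}{(q;q^2)_\infty}\sum_{n\geq 0} \fr{(q^3;q^2)_n}{(q^2;q^2)_n} q^n$; the $q$-binomial theorem with $a=q^3$, $z=q$ evaluates the inner sum, giving $U = (q^2;q^2)_\infty^2/[(1+q)(q;q^2)_\infty^2]$. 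A routine rational-function simplification then collapses the leftover pieces to $-(1+q^2)/[(1-q)(1-q^3)]$ (via $q^3-q^2+q-1 = -(1-q)(1+q^2)$), which combines with $U/(1-q) = (q^2;q^2)_\infty^2/[(1-q^2)(q;q^2)_\infty^2]$ to match the stated identity.

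The main technical hurdle is the choice of Heine transformation: only the second one lines up the prefactor and the transformed sum, thanks to the special values $b=q^2$ and $z=q^2$, so that a single index shift brings the remaining $_2\phi_1$ into the scope of the $q$-binomial theorem. After that, both summations reduce to applications of the same theorem, leaving only elementary algebra.
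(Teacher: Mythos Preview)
Your argument is correct. The Heine transformation you chose does exactly what you claim: with $q\to q^2$ and $(a,b,c,z)=(q^{2n+2},q^2,q^{2n+3},q^2)$ the prefactor collapses to $(1-q^{2n+1})/(1-q^2)$, the transformed ${}_2\phi_1$ reduces (after the index shift) to a tail of the $q$-binomial series, and your closed form for $S_n$ follows. The outer summation and the evaluation of $U$ via the $q$-binomial theorem are also correct; the final rational simplification $q^3-q^2+q-1=-(1-q)(1+q^2)$ gives exactly the constant term in the statement.

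Your route is, however, genuinely different from the paper's. Rather than summing the inner $k$-series in closed form, the authors introduce auxiliary generating functions
\[
\sum_{n\ge 0}A(2m,n)q^n,\qquad \sum_{n\ge 0}A'(2m,n)q^n
\]
built from infinite products, prove (via a contiguous relation for ${}_2\phi_1$) the telescoping identity $A(2m+2)=A(2)-\sum_{k=1}^m A'(2k)$, evaluate the two sides separately through Heine transformations, and only then let $m\to\infty$. Your single series $U$ coincides with their intermediate quantity~\eqref{help-2 double-1}, so the two proofs merge at that point. What you gain is directness: one Heine step and two applications of the $q$-binomial theorem, with no auxiliary sequences or limits in $m$. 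What the paper's approach gains is uniformity: the same $A$/$A'$ (and analogous $B$/$B'$) machinery is reused verbatim for Theorems~\ref{thm double-2} and~\ref{thm double-3}, whereas your inner-sum trick depends on the particular alignment of $c=aq$ and would need to be redone for the other exponent patterns.
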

With the help of Theorem~\ref{thm double-1}, we will derive the following positivity result.
\begin{Corollary}\label{cor positive-1}
There holds
$
\sum_{n\geq 0} F_1(n) q^n \succeq 0$.
\end{Corollary}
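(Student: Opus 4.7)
My plan is to start from the closed form in Theorem~\ref{thm double-1} and re-express it as a sum of manifestly non-negative pieces. The key tool is Gauss's identity
\[
\frac{\bigl(q^2;q^2\bigr)_\infty}{\bigl(q;q^2\bigr)_\infty}=\sum_{n\geq 0}q^{T_n},\qquad T_n:=\tfrac{n(n+1)}{2},
\]
which turns the first summand into $(1-q^2)^{-1}S^2$, where $S:=\sum_{n\geq 0}q^{T_n}$. Splitting $S=1+q+U$ with $U:=\sum_{n\geq 2}q^{T_n}$, expanding $S^2$, and distributing $1/(1-q^2)$ yields
\[
\frac{S^2}{1-q^2}=\frac{1+q}{1-q}+\frac{2U}{1-q}+\frac{U^2}{1-q^2}.
\]

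A short calculation over the common denominator $(1-q)(1-q^3)$ gives
\[
\frac{1+q}{1-q}-\frac{1+q^2}{(1-q)\bigl(1-q^3\bigr)}=q-\frac{q^3\bigl(1+q^2\bigr)}{(1-q)\bigl(1-q^3\bigr)},
\]
so Theorem~\ref{thm double-1} rearranges into
\[
\sum_{n\geq 0}F_1(n)\,q^n=q+\frac{2U}{1-q}+\frac{U^2}{1-q^2}-\frac{q^3\bigl(1+q^2\bigr)}{(1-q)\bigl(1-q^3\bigr)}.
\]
The first three summands on the right are manifestly $\succeq 0$, so the remaining task is to show that their combined coefficient at $q^N$ dominates that of the last summand for every $N\geq 3$.

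For this I would group exponents by triangular intervals $T_m\leq N<T_{m+1}$. On such an interval the coefficient of $q^N$ in $\tfrac{2U}{1-q}$ equals $2(m-1)$, while the coefficient of $q^N$ in $\tfrac{q^3(1+q^2)}{(1-q)(1-q^3)}$ is bounded above by $\lfloor N/3\rfloor+\lfloor(N-2)/3\rfloor$. Since $T_m\sim m^2/2$, the bound $2(m-1)$ alone is not enough for large $N$; however the coefficient of $q^N$ in $\tfrac{U^2}{1-q^2}$ counts triples $(a,b,k)$ with $a,b\geq 2$ and $T_a+T_b+2k=N$, a quantity that grows linearly in $N$ and easily absorbs the deficit asymptotically.

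The main obstacle is turning this qualitative domination into a uniform termwise inequality for every $N\geq 1$. I expect the cleanest route is an induction on $m$ with a finite range of small $N$ handled by direct computation. A slicker alternative, more consonant with the paper's hypergeometric style, is to apply a Heine-type transformation to the inner $k$-sum in Definition~\ref{def-1} so as to produce a single positive-sum representation of $\sum F_1(n)q^n$ and bypass the estimation altogether.
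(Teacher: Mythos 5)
Your reduction of Theorem~\ref{thm double-1} to the inequality
\begin{equation*}
q+\frac{2U}{1-q}+\frac{U^2}{1-q^2}\succeq \frac{q^3\bigl(1+q^2\bigr)}{(1-q)\bigl(1-q^3\bigr)},\qquad U=\sum_{n\geq 2}q^{n(n+1)/2},
\end{equation*}
is algebraically correct (Gauss's identity and the partial-fraction rearrangement both check out), but the proof stops exactly where the real work begins. The coefficient of $q^N$ on the right grows like $2N/3$, while the coefficient of $U^2/(1-q^2)$ counts pairs of triangular numbers $T_a+T_b\leq N$ of the right parity, which grows like $(\pi/4)N\approx 0.785\,N$; the margin is therefore thin, and in fact for small $N$ one finds equalities (e.g., at $N=5$ and $N=14$ the positive and negative coefficients coincide) and near-misses (at $N=19$ the count is $12$ against $11$). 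So the assertion that the $U^2$ term ``easily absorbs the deficit asymptotically'' is not only unproved, it conceals a genuinely delicate lattice-point estimate: to make it uniform in $N$ you would need explicit error bounds in the triangular-number counting (or a nontrivial induction with a substantial finite check), none of which is supplied. As it stands, the proposal is an unfinished plan, not a proof.

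By contrast, the paper avoids coefficient estimation entirely. It applies Ramanujan's $_1\psi_1$ summation~\eqref{psi-sum} to write
\begin{equation*}
\frac{\bigl(q^2;q^2\bigr)_\infty^2}{\bigl(q;q^2\bigr)_\infty^2}
=\sum_{n\geq 0}\left(\frac{q^n}{1-q^{4n+1}}-\frac{q^{3n+2}}{1-q^{4n+3}}\right),
\end{equation*}
so that the $n=0$ term cancels the negative rational function in Theorem~\ref{thm double-1} up to a manageable remainder, and for $n\geq 1$ the two fractions combine into manifestly non-negative expressions; the whole series is then bounded below by $(q+q^3)/\bigl(1-q^3\bigr)\succeq 0$. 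Your closing suggestion of finding ``a single positive-sum representation'' gestures in this direction, but the Lambert-series expansion via $_1\psi_1$ (rather than Gauss's triangular-number series) is the missing idea that makes the cancellation exact and termwise. If you want to salvage your route, you must either supply the uniform counting argument sketched above or switch to the $_1\psi_1$ expansion.
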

We now introduce our second integer partitions.
\begin{Definition}\label{def-2}
For any positive integer $N$,
let $F_2(N)$ be the number of partitions of $N$ such that for each $j=0,1,2$ satisfying
$3\mid 2k+j$ for some $k$, there are $n+j-2$ ones and $(2k+j)/3$ threes, the remaining
parts lie in the set $\{2n+2\}\cup (2n+3,2n+2k+3]$, no even parts are repeated, and the partition is counted
with weight $(-1)^j$, where $j$ is the number of even parts.
Then we have
\[
\sum_{n\geq 0} F_2(n) q^n
=\sum_{k,n\geq 0} \fr{\bigl(q^{2n+2};q^2\bigr)_k}{\bigl(q^{2n+5};q^2\bigr)_k}q^{2k+n+2}.
\]
\end{Definition}
\begin{Theorem}\label{thm double-2}
We have
\[
\sum_{n\geq 0} F_2(n) q^n
=\fr{q\bigl(1-q^3\bigr)}{(1-q)\bigl(1-q^2\bigr)}\fr{\bigl(q^2;q^2\bigr)_\infty^2}{\bigl(q;q^2\bigr)_\infty^2}
-\fr{q\bigl(1+q^2\bigr)}{(1-q)^2}.
\]
\end{Theorem}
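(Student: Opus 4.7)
The plan is to follow the same strategy as for Theorem~\ref{thm double-1}, adjusting parameters to match the modified series. Set
\[
S_2:=\sum_{k,n\geq 0}\fr{\bigl(q^{2n+2};q^2\bigr)_k}{\bigl(q^{2n+5};q^2\bigr)_k}\,q^{2k+n+2},
\]
and observe the link to the series of Theorem~\ref{thm double-1} through
\[
\fr{\bigl(q^{2n+2};q^2\bigr)_k}{\bigl(q^{2n+5};q^2\bigr)_k}
=\fr{1-q^{2n+3}}{1-q^{2n+2k+3}}\cdot\fr{\bigl(q^{2n+2};q^2\bigr)_k}{\bigl(q^{2n+3};q^2\bigr)_k}.
\]
Decomposing
$\fr{1-q^{2n+3}}{1-q^{2n+2k+3}}=1-\fr{q^{2n+3}\bigl(1-q^{2k}\bigr)}{1-q^{2n+2k+3}}$
writes $S_2$ as the difference of a ``Theorem-$1$-type'' double series (with $q^n$ in place of $q^{3n}$) and a correction that, after the re-index $k\mapsto k+1$, can be handled by the lemmas established in Section~\ref{sec lemmas}.

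The technical heart is to evaluate the inner $k$-sum. Writing it as the $q^2$-hypergeometric ${}_2\phi_1\bigl(q^{2n+2},q^2;q^{2n+5};q^2,q^2\bigr)$, one checks that neither $q$-Gauss nor $q$-Chu--Vandermonde applies directly (the argument does not match $c/(ab)=q$), so a Heine transformation such as
\[
{}_2\phi_1(a,b;c;q,z)=\fr{(abz/c;q)_\infty}{(z;q)_\infty}\,{}_2\phi_1(c/a,c/b;c;q,abz/c)
\]
is used to massage the sum into a form in which the $n$-dependence is a single rational factor in $q^{2n}$. After interchanging the order of summation and performing the $n$-sum (by expanding $1/\bigl(1-q^{2n+2k+3}\bigr)$ geometrically and reducing to standard Lambert-type series), the remaining $k$-sum should collapse, via the classical identity
\[
\fr{\bigl(q^2;q^2\bigr)_\infty}{\bigl(q;q^2\bigr)_\infty}=\sum_{n\geq 0}q^{n(n+1)/2},
\]
into the factor $\bigl(q^2;q^2\bigr)_\infty^2/\bigl(q;q^2\bigr)_\infty^2$ that appears on the right-hand side, together with the rational prefactor $q\bigl(1-q^3\bigr)/\bigl((1-q)\bigl(1-q^2\bigr)\bigr)$.

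The main obstacle is the accurate isolation of the rational correction $-q\bigl(1+q^2\bigr)/(1-q)^2$: after the Heine transformation and the interchange of summations one is left with a triple sum, and the low-index boundary slices contribute residues that must precisely recombine to this closed form rather than a messier expression. A secondary point is to verify absolute convergence for $|q|<1$ so that the interchange of summations is legitimate; this follows from standard majorization of the summands by those of a convergent $q$-geometric series.
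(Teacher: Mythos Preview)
Your proposal is not a proof but a plan whose decisive steps are left as hopes. Concretely: after the Euler--Heine transformation you quote, with base $q^2$ and $(a,b,c,z)=(q^{2n+2},q^2,q^{2n+5},q^2)$, the inner $k$-sum becomes
$\frac{(q;q^2)_\infty}{(q^2;q^2)_\infty}\,(1-q^{2n+3})\sum_{k\ge0}\frac{(q^3;q^2)_k}{(q^2;q^2)_k}\,\frac{q^k}{1-q^{2n+2k+3}}$,
so the $n$-dependence is \emph{not} reduced to ``a single rational factor'': it sits both in the prefactor and inside the sum. Expanding $1/(1-q^{2n+2k+3})$ geometrically then leaves a genuine triple sum in $n,k,j$, and nothing in your manipulations produces the Gaussian exponent needed for the triangular-number identity $\sum_{m\ge0}q^{m(m+1)/2}=(q^2;q^2)_\infty/(q;q^2)_\infty$ to apply; in fact the squared product on the right-hand side of the theorem arises in the paper from the $q$-binomial evaluation~\eqref{help-2 double-1}, not from triangular numbers. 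Finally, the assertion that unspecified ``boundary slices'' will ``precisely recombine'' to $-q(1+q^2)/(1-q)^2$ is exactly the computation you have to do, not a remark one can make afterwards.

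The paper's argument avoids all of this by recycling the machinery already built. The telescoping identity~\eqref{help-1 double-1},
$\sum_n A(2m+2,n)q^n=\sum_n A(2,n)q^n-\sum_{k=1}^m\sum_n A'(2k,n)q^n$,
was used in Theorem~\ref{thm double-1} together with the representation~\eqref{A'-2} of $A'(2k,\cdot)$. For Theorem~\ref{thm double-2} one inserts instead the \emph{other} representation~\eqref{A'-1}, which is $\frac{1}{1-q^3}\sum_{n\ge0}\frac{q^{n+2k}(q^{2n+2};q^2)_{k-1}}{(q^{2n+5};q^2)_{k-1}}$ and already contains the ratio $(q^{2n+2};q^2)_{k-1}/(q^{2n+5};q^2)_{k-1}$ defining $F_2$. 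Substituting~\eqref{A2}, \eqref{A2m} and~\eqref{A'-1} into~\eqref{help-1 double-1}, letting $m\to\infty$, and applying~\eqref{help-2 double-1} gives the closed form in three lines. The idea you are missing is that $S_2$ is precisely the telescoping remainder for the $A$-sequence under the alternative Heine reduction of $A'$; no decomposition of the Pochhammer ratio, no Lambert series, and no triple sum are needed.
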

\begin{Corollary}\label{cor positive-2}
There holds
$
\sum_{n\geq 0} F_2(n) q^n \succeq 0$.
\end{Corollary}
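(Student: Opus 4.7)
The plan is to reduce Corollary~\ref{cor positive-2} to Corollary~\ref{cor positive-1} by an elementary algebraic observation. Comparing the explicit formulas in Theorems~\ref{thm double-1} and~\ref{thm double-2} and using $1-q^3=(1-q)\bigl(1+q+q^2\bigr)$, one sees that
\[
\sum_{n\geq 0} F_2(n)\,q^n \;=\; q\bigl(1+q+q^2\bigr)\sum_{n\geq 0} F_1(n)\,q^n.
\]
Indeed, multiplying the first summand of Theorem~\ref{thm double-1} by $q(1+q+q^2)$ converts $\frac{q(1+q+q^2)}{1-q^2}$ into $\frac{q(1-q^3)}{(1-q)(1-q^2)}$, which is exactly the scalar factor appearing in the first summand of Theorem~\ref{thm double-2}. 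Multiplying the second summand by $q(1+q+q^2)$ cancels the factor $1-q^3=(1-q)(1+q+q^2)$ in the denominator and leaves $\frac{q(1+q^2)}{(1-q)^2}$, which matches the second summand of Theorem~\ref{thm double-2}.

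Since $q+q^2+q^3$ is a polynomial with non-negative coefficients and Corollary~\ref{cor positive-1} asserts that $\sum_{n\geq 0} F_1(n)q^n\succeq 0$, their Cauchy product is also $\succeq 0$, which is the content of Corollary~\ref{cor positive-2}.

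The only nontrivial step is recognising the factorisation above; once it is spotted, the conclusion is immediate and the real work has already been absorbed into Corollary~\ref{cor positive-1} (equivalently, Theorem~\ref{thm double-1}). As a byproduct, this approach predicts the linear recursion $F_2(N)=F_1(N-1)+F_1(N-2)+F_1(N-3)$ for $N\geq 1$ (with the convention $F_1(0)=0$), which one could verify combinatorially directly from the weighted-partition definitions as a consistency check.
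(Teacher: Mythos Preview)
Your argument is correct: the factorisation
\[
\sum_{n\geq 0} F_2(n)\,q^n \;=\; q\bigl(1+q+q^2\bigr)\sum_{n\geq 0} F_1(n)\,q^n
\]
follows immediately by comparing the closed forms in Theorems~\ref{thm double-1} and~\ref{thm double-2}, and then Corollary~\ref{cor positive-2} is an immediate consequence of Corollary~\ref{cor positive-1}.

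This is, however, a genuinely different route from the paper's. The paper does \emph{not} observe this factorisation; instead it re-runs, for $F_2$, the same analytic scheme used for $F_1$: insert the Ramanujan ${}_1\psi_1$ evaluation~\eqref{help-1 positive-1} into the closed form of Theorem~\ref{thm double-2}, split off the $n=0$ term, regroup the tail as a sum of visibly positive pieces, and finally bound below by an explicit rational function whose nonnegativity is checked by hand. Your approach is shorter and more conceptual: it absorbs all the analytic work into Corollary~\ref{cor positive-1} and replaces the second computation by a one-line multiplication by the nonnegative polynomial $q+q^2+q^3$. What the paper's approach buys is independence (each corollary is proved on its own, so the method is visibly reusable for series not related by such a factor) together with an explicit nonnegative lower bound for $\sum F_2(n)q^n$; what your approach buys is economy and the pleasant by-product $F_2(N)=F_1(N-1)+F_1(N-2)+F_1(N-3)$.
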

We now deal with our third example of integer partitions.
\begin{Definition}\label{def-3}
For any positive integer $N$,
let $G(N)$ be the number of partitions of $N$,
where if the partition has $3n$ ones then the largest part is $2n+2k+2$ for some $k$ and all parts
are in the interval $[2n+1,2n+2k+2]$, no even parts are repeated, and the partition is counted
with weight $(-1)^j$, where $j$ is the number of even parts.
Then we have
\[
\sum_{n\geq 0} G(n) q^n
=\sum_{k,n\geq 0} \fr{\bigl(q^{2n+2};q^2\bigr)_k}{\bigl(q^{2n+1};q^2\bigr)_k}q^{2k+5n+2}.
\]
\end{Definition}
\begin{Theorem}\label{thm double-3}
We have
\[
\sum_{n\geq 0} G(n) q^n
=\fr{q^3}{(1+q)\bigl(1-q^3\bigr)}\fr{\bigl(q^2;q^2\bigr)_\infty^2}{\bigl(q;q^2\bigr)_\infty^2}
-\fr{q^2(1-q)\bigl(-1+q^3+q^4+q^5\bigr)}{\bigl(1-q^3\bigr)^2 \bigl(1-q^5\bigr)}.
\]
\end{Theorem}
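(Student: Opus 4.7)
The plan is to mirror the proofs of Theorems \ref{thm double-1} and \ref{thm double-2}: first reduce the double sum to a single series in $n$ by evaluating the inner $k$-sum, then evaluate that single series in closed form. Write the inner sum as
\[
S_n := \sum_{k\geq 0} \fr{\bigl(q^{2n+2};q^2\bigr)_k}{\bigl(q^{2n+1};q^2\bigr)_k} q^{2k} = {}_2\phi_1\bigl(q^{2n+2}, q^2; q^{2n+1}; q^2, q^2\bigr),
\]
and apply a Heine-type transformation, presumably packaged as one of the lemmas collected in Section \ref{sec lemmas}. The aim is to decompose
\[
S_n = \alpha(q)\, P_n(q) + R_n(q),
\]
where $P_n(q)$ is an explicit infinite product in $n$ and $R_n(q)$ is a short rational function of $q^{2n+1}$ and $q^{2n+2}$.

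Substituting this decomposition back into the double series, the principal piece contributes $\alpha(q)\sum_{n\geq 0} P_n(q)\, q^{5n+2}$. Using the identity
\[
\fr{\bigl(q^{2n+2};q^2\bigr)_\infty}{\bigl(q^{2n+1};q^2\bigr)_\infty} = \fr{\bigl(q^2;q^2\bigr)_\infty}{\bigl(q;q^2\bigr)_\infty}\fr{\bigl(q;q^2\bigr)_n}{\bigl(q^2;q^2\bigr)_n},
\]
together with the $q$-binomial theorem in base $q^2$, the resulting $n$-sum collapses to a ratio of infinite products; straightforward simplification against the prefactor should produce the main term $\frac{q^3}{(1+q)(1-q^3)}\frac{(q^2;q^2)_\infty^2}{(q;q^2)_\infty^2}$. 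The rational tail $R_n(q)$, being of low degree in $q^{2n+1}$, sums over $n$ by elementary geometric series in $q^{5n}$; combining all pieces and simplifying should yield the subtracted rational function.

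The main obstacle is the asymmetry of the Pochhammer symbols in $S_n$. In contrast to Theorems \ref{thm double-1} and \ref{thm double-2}, the ratio $(q^{2n+2};q^2)_k/(q^{2n+1};q^2)_k$ is monotone \emph{increasing} in $k$ (each factor of the product exceeds one), so any naive comparison of $S_n$ to its $k\to\infty$ limit fails to produce the leading term in the desired form. The correct Heine transformation therefore has to be chosen with care to isolate a tractable rational correction, and matching the result against
\[
-\fr{q^2(1-q)\bigl(-1+q^3+q^4+q^5\bigr)}{\bigl(1-q^3\bigr)^2\bigl(1-q^5\bigr)}
\]
in lowest terms will require some partial-fraction bookkeeping; apart from that algebraic verification, the remaining steps are routine.
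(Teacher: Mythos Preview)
Your plan has a genuine gap: the inner $k$-sum
\[
S_n={}_2\phi_1\bigl(q^{2n+2},q^{2};\,q^{2n+1};\,q^{2},q^{2}\bigr)
\]
does not admit a decomposition $\alpha(q)\,P_n(q)+R_n(q)$ with $P_n$ an infinite product and $R_n$ a short rational function. The argument $z=q^{2}$ is generic (it is $z=q$ relative to the base), so none of the standard summation theorems applies; Heine's transformations \eqref{Heine-1}--\eqref{Heine-2} merely convert $S_n$ into another ${}_2\phi_1$, never into a product plus a finite correction. Nor do the lemmas of Section~\ref{sec lemmas} supply such a decomposition: Lemmas~\ref{lem B} and~\ref{lem B'} evaluate the auxiliary single series $\sum_n B(2m,n)q^n$ and $\sum_n B'(2m,n)q^n$ in the summation variable $n$, and Lemma~\ref{lem B'-B} is a contiguous relation between these families. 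None of them touches the inner sum over $k$.

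The paper's argument does not pass through $S_n$ at all. One telescopes via Lemma~\ref{lem B'-B} to obtain
\[
\sum_{n\ge 0}B(2m+2,n)q^{n}=\sum_{n\ge 0}B(2,n)q^{n}-\sum_{k=1}^{m}\sum_{n\ge 0}B'(2k,n)q^{n}.
\]
By Lemma~\ref{lem B'} the $k$-sum on the right is, up to a harmless factor, exactly the target double series truncated at level $m-1$; by Lemma~\ref{lem B} the first term on the right is the explicit rational function, while the left side is a single $n$-series with \emph{finite} Pochhammer ratios. Sending $m\to\infty$ turns that left side into $\sum_{n\ge 0}q^{3n}(q^{2n+2};q^{2})_\infty/(q^{2n+1};q^{2})_\infty$, which is then summed by the $q$-binomial theorem~\eqref{q-binom}. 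Incidentally, your description of the proofs of Theorems~\ref{thm double-1} and~\ref{thm double-2} as ``first evaluating the inner $k$-sum'' is also a misreading: those proofs use the identical telescoping scheme, with $A,A'$ and Lemma~\ref{lem A'-A} in place of $B,B'$ and Lemma~\ref{lem B'-B}.
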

\begin{Corollary}\label{cor positive-3}
There holds
$
\sum_{n\geq 0} G(n) q^n \succeq 0$.
\end{Corollary}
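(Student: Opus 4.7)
My plan is to leverage Theorem~\ref{thm double-3} together with Gauss's identity $\fr{(q^2;q^2)_\infty}{(q;q^2)_\infty} = \psi(q) := \sum_{n\geq 0} q^{n(n+1)/2}$. Squaring, $\fr{(q^2;q^2)_\infty^2}{(q;q^2)_\infty^2} = \psi(q)^2$ has non-negative integer coefficients (counting ordered representations of $n$ as a sum of two triangular numbers), so Theorem~\ref{thm double-3} reduces Corollary~\ref{cor positive-3} to showing
\[
\fr{q^3\,\psi(q)^2}{(1+q)\bigl(1-q^3\bigr)} \;\succeq\; \fr{q^2(1-q)\bigl(-1+q^3+q^4+q^5\bigr)}{\bigl(1-q^3\bigr)^2\bigl(1-q^5\bigr)}.
\]

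The next step is to put the difference into a manifestly positive form. Using $\fr{1}{1+q}=\fr{1-q}{1-q^2}$, a natural candidate common denominator is $D(q) = (1-q^2)(1-q^3)^2(1-q^5)$, whose reciprocal generates partitions into parts from the multiset $\{2,3,3,5\}$ and is thus $\succeq 0$. Clearing denominators yields $D(q)\sum_{n\geq 0} G(n)q^n = Q(q)\,\psi(q)^2 + R(q)$ for explicit polynomials $Q(q), R(q)$. A quick calculation shows this polynomial-plus-$\psi(q)^2$ expression has a negative coefficient at $q^5$, so a brute-force common-denominator argument does not display termwise positivity; one needs the positive-coefficient expansion of $1/D(q)$ to \emph{spread out} the defects of the numerator, which merely establishes non-negativity after convolution, not before.

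The principal obstacle is therefore to find a finer decomposition. Two promising routes suggest themselves: first, apply a Lambert-series or Bailey-pair type representation of $\psi(q)^2$ to pair each would-be negative contribution with a specific piece of the subtracted rational function, so that the cancellation apparently engineered into the numerator $(1-q)\bigl(-1+q^3+q^4+q^5\bigr)$ of the correction term becomes explicit term by term; second, exhibit a sign-reversing involution on the weighted partitions of Definition~\ref{def-3}, matching each partition with an odd number of even parts (weight $-1$) to a partition with an even number of even parts (weight $+1$), leaving fixed points that generate $G(N)$ non-negatively. The combinatorial route is delicate because of the simultaneous constraints on the number of ones ($3n$), the largest part ($2n+2k+2$), and the parts-interval $[2n+1,2n+2k+2]$; whereas the algebraic route requires identifying the right identity for $\psi(q)^2$ whose individual terms fit cleanly against the denominator $(1+q)(1-q^3)$ and whose ``bad" pieces collectively reproduce the correction $B(q)$.
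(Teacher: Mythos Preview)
Your write-up is not a proof: it is a plan that stops precisely where the work begins. You correctly invoke Theorem~\ref{thm double-3} and the non-negativity of $\psi(q)^2=(q^2;q^2)_\infty^2/(q;q^2)_\infty^2$, and you correctly observe that clearing the obvious common denominator $D(q)$ fails to exhibit termwise positivity. But from that point on you merely list two possible strategies --- a Lambert-series expansion of $\psi(q)^2$, or a sign-reversing involution on the partitions of Definition~\ref{def-3} --- and discuss why each looks delicate, without executing either. The corollary is not a formality once Theorem~\ref{thm double-3} is known; its entire content is the ``finer decomposition'' you explicitly say you have not found. As written, the proposal contains no argument that rules out $G(N)<0$ for some $N$.

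For the record, your first suggested route is exactly the one the paper follows, and the ``right identity for $\psi(q)^2$'' you are looking for is the Lambert series~\eqref{help-1 positive-1},
\[
\frac{(q^2;q^2)_\infty^2}{(q;q^2)_\infty^2}
=\sum_{n\ge 0}\left(\frac{q^n}{1-q^{4n+1}}-\frac{q^{3n+2}}{1-q^{4n+3}}\right),
\]
obtained from Ramanujan's ${}_1\psi_1$ summation~\eqref{psi-sum}. One substitutes this into the formula of Theorem~\ref{thm double-3}, splits off the $n=0$ term (which combines with the rational correction to give the visibly positive $\frac{q^2}{(1-q^3)(1-q^5)}$), and for $n\ge 1$ puts the two Lambert fractions over the common denominator $(1-q^{4n+1})(1-q^{4n+3})$, factoring the numerator as $q^{n+3}(1-q^{2n+2})-q^{5n+6}(1-q^{2n})$. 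The positivity then follows from the elementary estimate that replaces the factor $(1-q^{4n+1})^{-1}(1-q^{4n+3})^{-1}$ by $1$ and sums the resulting geometric series to an explicit rational function that is checked to be $\succeq 0$. None of these steps appears in your proposal; until one of your two ``promising routes'' is actually carried out, the argument is incomplete.
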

Our proofs for Corollaries~\ref{cor positive-1}, \ref{cor positive-2} and \ref{cor positive-3} on positive weighted partitions are all analytic.
Obviously, each of these three corollaries is equivalent to an inequality of integer partitions. So, it is natural to
ask for injective proofs for these inequalities.

\section{Preliminary lemmas}\label{sec lemmas}

In this section, we collect several lemmas which we will need to prove our main results.
To simplify the presentation, we introduce the following sequences.
\begin{Definition}\label{def sequences}
For any positive integers $m$ and $n$, let
\begin{gather*}
\sum_{n\geq 0} A(m,n) q^n
=\sum_{n\geq 0} \fr{q^{mn}\bigl(q^{2n+2};q^2\bigr)_\infty \bigl(q^{2n+4};q^2\bigr)_\infty}{\bigl(q^{2n-1};q^2\bigr)_\infty \bigl(q^{2n+1};q^2\bigr)_\infty}, \\
\sum_{n\geq 0} A'(m,n) q^n =
\sum_{n\geq 0}\fr{q^{m(n+1)}\bigl(q^{2n+2};q^2\bigr)_\infty \bigl(q^{2n+6};q^2\bigr)_\infty}{\bigl(q^{2n+1};q^2\bigr)_\infty\bigl(q^{2n+3};q^2\bigr)_\infty}, \\
\sum_{n\geq 0} B(m,n) q^n
=\sum_{n\geq 0} \fr{q^{mn}\bigl(q^{2n+2};q^2\bigr)_\infty \bigl(q^{2n+4};q^2\bigr)_\infty}{\bigl(q^{2n-3};q^2\bigr)_\infty \bigl(q^{2n+3};q^2\bigr)_\infty}, \\
\sum_{n\geq 0} B'(m,n) q^n
=\sum_{n\geq 0}\fr{q^{m(n+1)}\bigl(q^{2n+2};q^2\bigr)_\infty \bigl(q^{2n+6};q^2\bigr)_\infty}{\bigl(q^{2n-1};q^2\bigr)_\infty\bigl(q^{2n+5};q^2\bigr)_\infty}.
\end{gather*}
\end{Definition}
\begin{Lemma}\label{lem A}
We have
\begin{equation}
\sum_{n\geq 0}A(2,n)q^n = \fr{-q\bigl(1+q^2\bigr)}{(1-q)^2 \bigl(1-q^3\bigr)}\label{A2}
\end{equation}
and for any positive integer $m\geq 2$,
\begin{equation}
\sum_{n\geq 0} A(2m,n) q^n
= \fr{-1}{(1-q)^2}\sum_{n\geq 0}\fr{q^{n+1}\bigl(q^{2n+2};q^2\bigr)_{m-1}}{\bigl(q^{2n+3};q^2\bigr)_{m-2}}.\label{A2m}
\end{equation}
\end{Lemma}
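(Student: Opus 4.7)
The plan is to reduce the double series to a single basic hypergeometric ${}_2\phi_1$, apply Heine's first transformation, and then finish both identities with case-specific simplifications.

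First, denote the $k$th summand of the defining series by $q^{rk}\pi(k)$, where $r$ stands for the exponent (so $r=2$ for \eqref{A2} and $r=2m$ for \eqref{A2m}) and $\pi(k) = \frac{(q^{2k+2};q^2)_\infty(q^{2k+4};q^2)_\infty}{(q^{2k-1};q^2)_\infty(q^{2k+1};q^2)_\infty}$. A short calculation using \eqref{basic} produces the multiplicative recursion $\pi(k+1)/\pi(k) = \frac{(1-q^{2k-1})(1-q^{2k+1})}{(1-q^{2k+2})(1-q^{2k+4})}$, which iterates to $\pi(k)=\pi(0)\frac{(q^{-1};q^2)_k(q;q^2)_k}{(q^2;q^2)_k(q^4;q^2)_k}$; combining this with $(q^{-1};q^2)_\infty = -q^{-1}(1-q)(q;q^2)_\infty$ and $(q^4;q^2)_\infty = (q^2;q^2)_\infty/(1-q^2)$ gives $\pi(0) = \frac{-q(q^2;q^2)_\infty^2}{(1-q)(1-q^2)(q;q^2)_\infty^2}$. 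Thus
\[
\sum_{n\geq 0}A(r,n)q^n = \frac{-q(q^2;q^2)_\infty^2}{(1-q)(1-q^2)(q;q^2)_\infty^2}\cdot {}_2\phi_1\bigl(q^{-1},q;q^4;q^2,q^r\bigr),
\]
and Heine's first transformation with base $q^2$ (Gasper--Rahman, III.1) then yields
\[
{}_2\phi_1\bigl(q^{-1},q;q^4;q^2,q^r\bigr) = \frac{(q;q^2)_\infty(q^{r-1};q^2)_\infty}{(q^4;q^2)_\infty(q^r;q^2)_\infty}\,{}_2\phi_1\bigl(q^3,q^r;q^{r-1};q^2,q\bigr).
\]

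For \eqref{A2m}, set $r=2m$ with $m\geq 2$ and rewrite each term of the transformed ${}_2\phi_1$ using the elementary identities $(q^{2m};q^2)_n(q^2;q^2)_{m-1} = (q^2;q^2)_{n+m-1}$, $(q^{2m-1};q^2)_n(q;q^2)_{m-1} = (q;q^2)_{n+m-1}$ and $(1-q)(q^3;q^2)_n = (q;q^2)_{n+1}$, together with the ratios $(q^2;q^2)_{n+m-1}/(q^2;q^2)_n = (q^{2n+2};q^2)_{m-1}$ and $(q;q^2)_{n+m-1}/(q;q^2)_{n+1} = (q^{2n+3};q^2)_{m-2}$; this gives
\[
{}_2\phi_1\bigl(q^3,q^{2m};q^{2m-1};q^2,q\bigr) = \frac{(q;q^2)_{m-1}}{(1-q)(q^2;q^2)_{m-1}}\sum_{n\geq 0}\frac{q^n(q^{2n+2};q^2)_{m-1}}{(q^{2n+3};q^2)_{m-2}}.
\]
Multiplying by the overall prefactor derived above, all infinite-product factors together with the surviving $(q;q^2)_{m-1}$ and $(q^2;q^2)_{m-1}$ cancel cleanly, leaving $-q/(1-q)^2$ times the displayed sum, which is exactly \eqref{A2m} after absorbing a factor of $q$. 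For \eqref{A2}, set $r=2$; the transformed series becomes ${}_2\phi_1(q^3,q^2;q;q^2,q)$, and the identity $(q^3;q^2)_n/(q;q^2)_n = (1-q^{2n+1})/(1-q)$ reduces it to $\frac{1}{1-q}\bigl(\frac{1}{1-q}-\frac{q}{1-q^3}\bigr) = \frac{1+q^2}{(1-q)(1-q^3)}$, and multiplying by the overall prefactor produces the closed form of \eqref{A2}.

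The main obstacle is the bookkeeping in the $r=2m$ step: one must carefully match all shifted $q$-Pochhammer symbols so that the overall prefactor simplifies precisely to $-q/(1-q)^2$ and the surviving sum matches the claimed expression. Although entirely elementary, this is where the exact form of \eqref{A2m} emerges and where careful tracking of indices is essential.
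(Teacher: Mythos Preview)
Your proof is correct and follows essentially the same route as the paper: rewrite the series as a ${}_2\phi_1$ via \eqref{basic}, apply a Heine transformation, and simplify the resulting Pochhammer ratios. The only minor difference is that for \eqref{A2} the paper uses the second Heine transformation \eqref{Heine-2} to obtain the terminating series ${}_2\phi_1\bigl(q^{-2},q;q^3;q^2,q^3\bigr)$, whereas you apply \eqref{Heine-1} uniformly and sum the resulting non-terminating ${}_2\phi_1\bigl(q^3,q^2;q;q^2,q\bigr)$ directly---both give the same closed form with comparable effort.
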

\begin{proof}
Throughout we will use Heine's transformations~\cite[Appendix~III, equations~(III.1)--(III.2)]{Gasper/Rahman}
\begin{align}
\pFq{2}{1}{a,b}{c}{q, z} &=\fr{(b,az;q)_\infty}{(c,z;q)_\infty}\pFq{2}{1}{c/b,z}{az}{q, b}, \label{Heine-1}\\
&= \fr{(c/b,bz;q)_\infty}{(c,z;q)_\infty}\pFq{2}{1}{abz/c,b}{bz}{q, c/b}, \label{Heine-2}
\end{align}
where
\[
\pFq{2}{1}{a,b}{c}{q, z} = \sum_{n\geq 0}\fr{(a;q)_n (b;q)_n}{(q;q)_n (c;q)_n} z^n.
\]
We will also use the $q$-binomial theorem~\cite[equation (2.2.1)]{Andrews},
\begin{equation}
\sum_{n\geq 0} \fr{(a;q)_n}{(q;q)_n}z^n = \fr{(az;q)_\infty}{(z;q)_\infty}\label{q-binom}
\end{equation}
and Ramanujan's $_1\psi_1$ summation formula~\cite[equation (5.2.1)]{Gasper/Rahman}
\begin{equation}
\sum_{n=-\infty}^{\infty}\fr{(a;q)_n}{(b;q)_n}z^n
=\fr{(q,b/a,az,q/(az);q)_\infty}{(b,q/a,z,b/(az);q)_\infty}.\label{psi-sum}
\end{equation}
We start with~\eqref{A2}. By~\eqref{basic} and~\eqref{Heine-2}, we obtain
\begin{align*}
\sum_{n\geq 0}A(2,n)q^{2n} &=\sum_{n\geq 0} \fr{q^{2n} \bigl(q^{2n+2},q^{2n+4};q^2\bigr)_\infty}{\bigl(q^{2n-1},q^{2n+1};q^2\bigr)_\infty} =\fr{\bigl(q^2,q^4;q^2\bigr)_\infty}{\bigl(q^{-1},q;q^2\bigr)_\infty} \sum_{n\geq 0}\fr{\bigl(q^{-1},q;q^2\bigr)_n q^{2n}}{\bigl(q^2,q^4;q^2\bigr)_n} \\
&=\fr{\bigl(q^2,q^4;q^2\bigr)_\infty}{\bigl(q^{-1},q;q^2\bigr)_\infty}\pFq{2}{1}{q^{-1},q}{q^{4}}{q^2, q^{2}} \\
&=\fr{\bigl(q^2,q^4;q^2\bigr)_\infty}{\bigl(q^{-1},q;q^2\bigr)_\infty} \fr{\bigl(q^3,q^3;q^2\bigr)_\infty}{\bigl(q^2,q^4;q^2\bigr)_\infty}
\pFq{2}{1}{q^{-2},q}{q^{3}}{q^2, q^{3}} \\
&=\fr{-q}{(1-q)^3} \left(1- \fr{q(1-q)\bigl(1-q^2\bigr)}{\bigl(1-q^2\bigr)\bigl(1-q^3\bigr)} \right)
=\fr{-q\bigl(1+q^2\bigr)}{(1-q)^2 \bigl(1-q^3\bigr)}.
\end{align*}
Now assume that $m$ is a positive integer such that $m\geq 2$.
We get by using~\eqref{basic} and~\eqref{Heine-1}
\begin{align*}
\sum_{n\geq 0} A(2m,n) q^n
={}&\sum_{n\geq 0} \fr{q^{2mn} \bigl(q^{2n+2},q^{2n+4};q^2\bigr)_\infty}{\bigl(q^{2n-1},q^{2n+1};q^2\bigr)_\infty}
=\fr{\bigl(q^2,q^4;q^2\bigr)_\infty}{\bigl(q^{-1},q;q^2\bigr)_\infty} \sum_{n\geq 0} \fr{\bigl(q^{-1},q;q^2\bigr)_n q^{2mn}}{\bigl(q^2,q^4;q^2\bigr)_n} \\
={}&\fr{\bigl(q^2,q^4;q^2\bigr)_\infty}{\bigl(q^{-1},q;q^2\bigr)_\infty}
\pFq{2}{1}{q^{-1},q}{q^{4}}{q^2, q^{2m}} \\
={}&\fr{\bigl(q^2,q^4;q^2\bigr)_\infty}{\bigl(q^{-1},q;q^2\bigr)_\infty}\fr{\bigl(q,q^{2m-1};q^2\bigr)_\infty}{\bigl(q^4,q^{2m};q^2\bigr)_\infty}
\pFq{2}{1}{q^{3},q^{2m}}{q^{2m-1}}{q^2, q} \\
={}&\fr{\bigl(q^2;q^2\bigr)_\infty}{\bigl(q^{-1};q^2\bigr)_\infty}\fr{\bigl(q^{2m-1};q^2\bigr)_\infty}{\bigl(q^{2m};q^2\bigr)_\infty}
\fr{\bigl(q^3,q^{2m};q^2\bigr)_\infty}{\bigl(q^2,q^{2m-1};q^2\bigr)_\infty}\sum_{n\geq 0}\fr{q^n \bigl(q^{2n+2}, q^{2n+2m-1};q^2\bigr)_\infty}{\bigl(q^{2n+3},q^{2n+2m};q^2\bigr)_\infty} \\
={}&\fr{-q}{(1-q)^2}\sum_{n\geq 0}\fr{q^n \bigl(q^{2n+2};q^2\bigr)_{m-1}}{\bigl(q^{2n+3};q^2\bigr)_{m-2}}
=\fr{-1}{(1-q)^2}\sum_{n\geq 1}\fr{q^n \bigl(q^{2n};q^2\bigr)_{m-1}}{\bigl(q^{2n+1};q^2\bigr)_{m-2}},
\end{align*}
which confirms~\eqref{A2m}.
\end{proof}
\begin{Lemma}\label{lem A'}
For any positive integer, we have
\begin{align}
\sum_{n\geq 0} A'(2m,n) q^n
&=\fr{1}{1-q^3}\sum_{n\geq 0}\fr{q^{n+2m}\bigl(q^{2n+2};q^2\bigr)_{m-1}}{\bigl(q^{2n+5};q^2\bigr)_{m-1}} \label{A'-1} \\
&=\fr{1}{1-q}\sum_{n\geq 0}\fr{q^{3n+2m}\bigl(q^{2n+2};q^2\bigr)_{m-1}}{\bigl(q^{2n+3};q^2\bigr)_{m-1}} \label{A'-2}.
\end{align}
\end{Lemma}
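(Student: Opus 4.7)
The plan is to mirror the proof of Lemma~\ref{lem A}. As a first step, I convert each of the four infinite-product factors appearing in the defining series for $\sum A'(2m,n)q^n$ via (\ref{basic}), writing $(q^{2n+k};q^2)_\infty = (q^k;q^2)_\infty/(q^k;q^2)_n$ for $k \in \{1,2,3,6\}$, so as to recast the generating function as
\[
\sum_{n\ge 0} A'(2m,n)\,q^n = q^{2m}\,\frac{(q^2,q^6;q^2)_\infty}{(q,q^3;q^2)_\infty}\,\pFq{2}{1}{q,q^3}{q^6}{q^2,q^{2m}}.
\]
Both identities (\ref{A'-1}) and (\ref{A'-2}) will then arise from applying Heine's transformation (\ref{Heine-1}) to this $_2\phi_1$, the two identities corresponding to the two legitimate choices of which upper parameter ($q$ or $q^3$) plays the role of $a$ and which plays $b$, via the symmetry of $_2\phi_1$ in its upper parameters.

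For (\ref{A'-2}), I take $a=q$, $b=q^3$, $c=q^6$, $z=q^{2m}$ in (\ref{Heine-1}) at base $q^2$. The transformed series is $\pFq{2}{1}{q^3,q^{2m}}{q^{2m+1}}{q^2,q^3}$, which has argument $q^3$. Reversing the packaging via (\ref{basic}), the ratios $(q^3;q^2)_n/(q^2;q^2)_n$ and $(q^{2m};q^2)_n/(q^{2m+1};q^2)_n$ unfold into $n$-dependent infinite-product tails that combine telescopically into the finite Pochhammer ratio $(q^{2n+2};q^2)_{m-1}/(q^{2n+3};q^2)_{m-1}$, while the $n$-independent infinite products assemble against the original prefactor and reduce to $(q^3;q^2)_\infty/(q;q^2)_\infty = 1/(1-q)$, yielding (\ref{A'-2}).

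For (\ref{A'-1}), I instead take $a=q^3$, $b=q$ in (\ref{Heine-1}); the transformed series becomes $\pFq{2}{1}{q^5,q^{2m}}{q^{2m+3}}{q^2,q}$ with argument $q$. The analogous unfolding produces the finite Pochhammer ratio $(q^{2n+2};q^2)_{m-1}/(q^{2n+5};q^2)_{m-1}$, and the leftover infinite-product prefactor now collapses to $(q^5;q^2)_\infty/(q^3;q^2)_\infty = 1/(1-q^3)$, producing (\ref{A'-1}).

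The conceptual content is identical to the derivation of (\ref{A2m}); the only delicate point is bookkeeping the many infinite-product cancellations, and in particular checking that every $m$-dependent factor ($(q^{2m};q^2)_\infty$ and $(q^{2m+1};q^2)_\infty$ in the first case, $(q^{2m};q^2)_\infty$ and $(q^{2m+3};q^2)_\infty$ in the second) cancels completely, leaving a residual prefactor depending only on $q$.
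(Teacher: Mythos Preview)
Your proposal is correct and follows exactly the same approach as the paper: rewrite the defining series as $q^{2m}\frac{(q^2,q^6;q^2)_\infty}{(q,q^3;q^2)_\infty}\,{}_2\phi_1(q,q^3;q^6;q^2,q^{2m})$, then apply Heine's transformation~\eqref{Heine-1} with the two orderings $(a,b)=(q^3,q)$ and $(a,b)=(q,q^3)$ to obtain~\eqref{A'-1} and~\eqref{A'-2} respectively. The paper presents~\eqref{A'-1} in detail and merely remarks that~\eqref{A'-2} follows by swapping the roles of $a$ and $b$, which is precisely your observation.
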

\begin{proof}
We start with~\eqref{A'-1}. We have by~\eqref{basic}
\begin{align*}
\sum_{n\geq 0} A'(2m,n) q^n
&=\sum_{n\geq 0}\fr{q^{2m(n+1)}\bigl(q^{2n+2};q^2\bigr)_\infty \bigl(q^{2n+6};q^2\bigr)_\infty}{\bigl(q^{2n+1};q^2\bigr)_\infty\bigl(q^{2n+3};q^2\bigr)_\infty} \\
&=q^{2m} \fr{\bigl(q^2,q^6;q^2\bigr)_\infty}{\bigl(q,q^3;q^2\bigr)_\infty}
\sum_{n\geq 0}\fr{q^{2m}\bigl(q,q^3;q^2\bigr)_n}{\bigl(q^2,q^6;q^2\bigr)_n} \\
&=q^{2m} \fr{\bigl(q^2,q^6;q^2\bigr)_\infty}{\bigl(q,q^3;q^2\bigr)_\infty}
\pFq{2}{1}{q,q^3}{q^6}{q^2, q^{2m}} \\
&=q^{2m} \fr{\bigl(q^2,q^6;q^2\bigr)_\infty}{\bigl(q,q^3;q^2\bigr)_\infty}
\fr{\bigl(q,q^{2m+3};q^2\bigr)_\infty}{\bigl(q^6,q^{2m};q^2\bigr)_\infty}
\pFq{2}{1}{q^5,q^{2m}}{q^{2m+3}}{q^2, q} \\
&=\fr{q^{2m}}{1-q^3}\sum_{n\geq 0} \fr{q^n \bigl(q^{2n+2},q^{2m+2n+3};q^2\bigr)_\infty}{\bigl(q^{2n+5},q^{2m+2n};q^2\bigr)_\infty} \\
&=\fr{1}{1-q^3}\sum_{n\geq 0}\fr{q^{2m+n}\bigl(q^{2n+2};q^2\bigr)_{m-1}}{\bigl(q^{2n+5};q^2\bigr)_{m-1}},
\end{align*}
where in the fourth step we applied~\eqref{Heine-1} with $(a,b,c,z)=\bigl(q^3,q,q^6,q^{2m}\bigr)$. This proves the desired result.

As for~\eqref{A'-2}, we omit the details as the proof follows exactly the same steps as in the proof of~\eqref{A'-1} with the exception that~\eqref{Heine-1}
is employed with $(a,b,c,z)=\bigl(q,q^3,q^6,q^{2m}\bigr)$ rather than~${(a,b,c,z)=\bigl(q^3,q,q^6,q^{2m}\bigr)}$.
\end{proof}
\begin{Lemma}\label{lem A'-A}
There holds
$
\sum_{n\geq 0} A'(m,n) q^n
=\sum_{n\geq 0} ( A(m,n)-A(m+2,n) ) q^n$.
\end{Lemma}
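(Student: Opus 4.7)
The plan is to prove this identity by an entirely elementary manipulation: combine the two sums on the right into a single series, factor, absorb, and reindex. No $q$-series transformations (such as Heine) should be needed here.

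First, I would write the right-hand side as a single sum:
\[
\sum_{n\geq 0}\bigl(A(m,n)-A(m+2,n)\bigr)q^n
=\sum_{n\geq 0}\fr{\bigl(q^{mn}-q^{(m+2)n}\bigr)\bigl(q^{2n+2};q^2\bigr)_\infty \bigl(q^{2n+4};q^2\bigr)_\infty}{\bigl(q^{2n-1};q^2\bigr)_\infty \bigl(q^{2n+1};q^2\bigr)_\infty},
\]
and factor the numerator of the coefficient as $q^{mn}-q^{(m+2)n}=q^{mn}\bigl(1-q^{2n}\bigr)$. The $n=0$ term then vanishes, so the sum effectively starts at $n=1$.

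Next, I would absorb the factor $1-q^{2n}$ into the Pochhammer symbol using the identity $\bigl(1-q^{2n}\bigr)\bigl(q^{2n+2};q^2\bigr)_\infty=\bigl(q^{2n};q^2\bigr)_\infty$, turning the right-hand side into
\[
\sum_{n\geq 1}\fr{q^{mn}\bigl(q^{2n};q^2\bigr)_\infty\bigl(q^{2n+4};q^2\bigr)_\infty}{\bigl(q^{2n-1};q^2\bigr)_\infty\bigl(q^{2n+1};q^2\bigr)_\infty}.
\]
Finally, I would shift the summation index by $n\mapsto n+1$, which substitutes $2n$, $2n+4$, $2n-1$, $2n+1$ inside the products by $2n+2$, $2n+6$, $2n+1$, $2n+3$, respectively, and replaces $q^{mn}$ by $q^{m(n+1)}$. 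The resulting series is precisely the defining expression for $\sum_{n\geq 0}A'(m,n)q^n$ from Definition~\ref{def sequences}, completing the proof.

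There is no serious obstacle; the only point requiring care is ensuring that the $n=0$ term on the right genuinely vanishes (which it does because the prefactor $q^{mn}-q^{(m+2)n}$ is zero at $n=0$, so the potentially problematic factor $\bigl(q^{-1};q^2\bigr)_\infty$ in the denominator does not cause issues) so that the index shift is legitimate.
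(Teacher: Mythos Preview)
Your proof is correct and is genuinely more elementary than the paper's. The paper first rewrites both $\sum_n A(m,n)q^n$ and $\sum_n A(m+2,n)q^n$ as $\frac{(q^2,q^4;q^2)_\infty}{(q^{-1},q;q^2)_\infty}\,{}_2\phi_1\bigl[\begin{smallmatrix}q^{-1},\,q\\ q^4\end{smallmatrix};q^2,q^m\bigr]$ (respectively with $q^{m+2}$), then invokes the contiguous relation
\[
{}_2\phi_1(a,b;c;q,z)-{}_2\phi_1(a,b;c;q,qz)=z\,\frac{(1-a)(1-b)}{1-c}\,{}_2\phi_1(qa,qb;qc;q,z)
\]
from Krattenthaler's list to produce the ${}_2\phi_1$ corresponding to $A'(m,\cdot)$. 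Your argument bypasses this machinery entirely: you subtract termwise, factor $q^{mn}(1-q^{2n})$, absorb $1-q^{2n}$ into the Pochhammer symbol, and shift the index. In effect you are reproving the special case of the contiguous relation needed here in two lines. Your route is shorter and self-contained; the paper's route has the (minor) advantage that it situates the identity within a standard catalogue of ${}_2\phi_1$ relations, which is consistent with how the companion Lemma~\ref{lem B'-B} is handled. One small remark: the factor $(q^{-1};q^2)_\infty$ in the $n=0$ denominator is not actually problematic---it is a finite nonzero number for $0<|q|<1$---so the only point to note is that the numerator vanishes there, which you do.
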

\begin{proof}
We need the following contiguous relation which can be found in~\cite[equation (2.1)]{Krattenthaler1993}:
\begin{equation}
\pFq{2}{1}{a,b}{c}{q, z} - \pFq{2}{1}{a,b}{c}{q, qz} = z \fr{(1-a)(1-b)}{1-c} \pFq{2}{1}{qa,qb}{qc}{q, z}.\label{Kratt-21}
\end{equation}
Then by~\eqref{Kratt-21} applied with $q\to q^2$ and $(a,b,c,z)=\bigl(q^{-1},q,q^4,q^m\bigr)$, we get
\begin{gather*}
\sum_{n\geq 0} ( A(m,n)-A(m+2,n) ) q^n\\
\qquad= \fr{\bigl(q^2,q^4;q^2\bigr)_\infty}{\bigl(q^{-1},q;q^2\bigr)_\infty}
\left( \pFq{2}{1}{q^{-1},q}{q^4}{q^2, q^m}
 - \pFq{2}{1}{q^{-1},q}{q^4}{q^2, q^{m+2}} \right) \\
\qquad=\fr{\bigl(q^2,q^4;q^2\bigr)_\infty}{\bigl(q^{-1},q;q^2\bigr)_\infty} q^m \fr{\bigl(1-q^{-1}\bigr)(1-q)}{1-q^4}
\pFq{2}{1}{q,q^3}{q^6}{q^2, q^{m}} \\
\qquad=\fr{\bigl(q^2,q^6;q^2\bigr)_\infty}{\bigl(q,q^3;q^2\bigr)_\infty} \sum_{n\geq 0}\fr{q^{m(n+1)} \bigl(q,q^3;q^2\bigr)_n}{\bigl(q^2,q^6;q^2\bigr)_n} =\sum_{n\geq 0}\fr{q^{m(n+1)}\bigl(q^{2n+2};q^2\bigr)_\infty \bigl(q^{2n+6};q^2\bigr)_\infty}{\bigl(q^{2n+1};q^2\bigr)_\infty \bigl(q^{2n+3};q^2\bigr)_\infty}.
\end{gather*}
This proves the lemma.
\end{proof}
\begin{Lemma}\label{lem B}
We have
\begin{equation}
\sum_{n\geq 0}B(2,n)q^n = \fr{-q^3\bigl(1-q^3-q^4-q^5\bigr)}{\bigl(1-q^3\bigr)^2 \bigl(1-q^5\bigr)}\label{B2}
\end{equation}
and for any positive integer $m\geq 2$,
\begin{equation}
\sum_{n\geq 0} B(2m,n) q^n
= \fr{1}{(1-q)\bigl(1-q^3\bigr)}\sum_{n\geq 0}\fr{q^{3n+4}\bigl(q^{2n+2};q^2\bigr)_{m-1}}{\bigl(q^{2n+1};q^2\bigr)_{m-2}}.\label{B2m}
\end{equation}
\end{Lemma}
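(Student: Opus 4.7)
The plan is to mirror the structure of the proof of Lemma~\ref{lem A}. First I would factor the $n = 0$ infinite products out of the defining series for $B(2m, n)$ using~\eqref{basic}, rewriting it as
\[
\sum_{n\geq 0} B(2m, n) q^n = \fr{\bigl(q^2, q^4; q^2\bigr)_\infty}{\bigl(q^{-3}, q^3; q^2\bigr)_\infty}\pFq{2}{1}{q^{-3}, q^3}{q^4}{q^2, q^{2m}}.
\]
The evaluation then splits into the generic case $m \geq 2$ and the exceptional case $m = 1$, each handled by one application of a Heine transformation.

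For~\eqref{B2m}, I would apply Heine's first transformation~\eqref{Heine-1} with $(a, b, c, z) = \bigl(q^{-3}, q^3, q^4, q^{2m}\bigr)$ (with $q$ replaced by $q^2$), producing
\[
\pFq{2}{1}{q^{-3}, q^3}{q^4}{q^2, q^{2m}} = \fr{\bigl(q^3, q^{2m-3}; q^2\bigr)_\infty}{\bigl(q^4, q^{2m}; q^2\bigr)_\infty}\pFq{2}{1}{q, q^{2m}}{q^{2m-3}}{q^2, q^3}.
\]
Expanding the right-hand $_2\phi_1$ back as a series in $n$ via~\eqref{basic} and collecting all infinite-product factors, the accumulated prefactor collapses to $(q; q^2)_\infty/(q^{-3}; q^2)_\infty$. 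Using $(q^{-3}; q^2)_\infty = (1-q^{-3})(1-q^{-1})(q; q^2)_\infty$ together with $(1-q^{-3})(1-q^{-1}) = q^{-4}(1-q)(1-q^3)$, this reduces to $q^4/[(1-q)(1-q^3)]$. The remaining summand simplifies (again by~\eqref{basic}) to $q^{3n+4}(q^{2n+2}; q^2)_{m-1}/(q^{2n+1}; q^2)_{m-2}$, matching~\eqref{B2m}.

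For~\eqref{B2}, I would instead apply Heine's second transformation~\eqref{Heine-2} with $(a, b, c, z) = \bigl(q^{-3}, q^3, q^4, q^2\bigr)$. The key arithmetic here is $abz/c = q^{-3}\cdot q^3 \cdot q^2/q^4 = q^{-2}$, so the transformed series
\[
\pFq{2}{1}{q^{-2}, q^3}{q^5}{q^2, q}
\]
terminates at $n = 1$ since $(q^{-2}; q^2)_n$ vanishes for $n \geq 2$. Writing out the two surviving terms and factoring the resulting numerator via the identity $-1 + q + q^3 - q^6 = -(1-q)(1-q^3-q^4-q^5)$ yields the closed form $-(1-q)(1-q^3-q^4-q^5)/[q(1-q^5)]$. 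Multiplying by the prefactor $q^4/[(1-q)(1-q^3)^2]$ (obtained by the same cancellation as above, augmented by $(q^5; q^2)_\infty/(q^3; q^2)_\infty = 1/(1-q^3)$) produces the stated evaluation in~\eqref{B2}.

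The main obstacle I anticipate is bookkeeping of the negative-exponent infinite products such as $(q^{-3}; q^2)_\infty$ and the rational simplification $(1-q^{-3})(1-q^{-1}) = q^{-4}(1-q)(1-q^3)$, which is what turns the apparently singular leading factor into a manageable rational function. A smaller but potentially distracting step in the $m = 1$ case is the ad hoc factorisation of $-1 + q + q^3 - q^6$: recognising the factor $(1-q)$ is essential to cancel against the $(1-q)$ in the prefactor and expose the denominator $(1-q^3)^2(1-q^5)$ predicted by~\eqref{B2}.
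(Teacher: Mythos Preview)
Your proposal is correct and follows essentially the same route as the paper: rewrite the defining series as a $_2\phi_1$, then apply Heine's first transformation~\eqref{Heine-1} for~\eqref{B2m} and Heine's second transformation~\eqref{Heine-2} for~\eqref{B2}, exploiting that the latter terminates because $abz/c=q^{-2}$. The only cosmetic difference is that the paper simplifies the two-term sum for~\eqref{B2} by splitting into partial fractions $\tfrac{q^4}{(1-q)(1-q^3)^2}-\tfrac{q^3}{(1-q)(1-q^3)(1-q^5)}$ rather than via your explicit factorisation $-1+q+q^3-q^6=-(1-q)(1-q^3-q^4-q^5)$; both lead to the same closed form.
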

By~\eqref{basic} and~\eqref{Heine-2}, we obtain
\begin{align*}
\sum_{n\geq 0}B(2,n)q^{2n} &=\sum_{n\geq 0} \fr{q^{2n} \bigl(q^{2n+2},q^{2n+4};q^2\bigr)_\infty}{\bigl(q^{2n-3},q^{2n+3};q^2\bigr)_\infty} =\fr{\bigl(q^2,q^4;q^2\bigr)_\infty}{\bigl(q^{-3},q^3;q^2\bigr)_\infty} \sum_{n\geq 0}\fr{\bigl(q^{-3},q^3;q^2\bigr)_n q^{2n}}{\bigl(q^2,q^4;q^2\bigr)_n} \\
&=\fr{\bigl(q^2,q^4;q^2\bigr)_\infty}{\bigl(q^{-3},q^3;q^2\bigr)_\infty}\pFq{2}{1}{q^{-3},q^3}{q^{4}}{q^2, q^{2}} \\
&=\fr{\bigl(q^2,q^4;q^2\bigr)_\infty}{\bigl(q^{-1},q;q^2\bigr)_\infty} \fr{\bigl(q,q^5;q^2\bigr)_\infty}{\bigl(q^2,q^4;q^2\bigr)_\infty}
\pFq{2}{1}{q^{-2},q^3}{q^{5}}{q^2, q} \\
&=\fr{q^4}{(1-q)\bigl(1-q^3\bigr)^2} \left(1+ \fr{q\bigl(1-q^{-2}\bigr)\bigl(1-q^3\bigr)}{\bigl(1-q^2\bigr)\bigl(1-q^5\bigr)} \right) \\
&=\fr{q^4}{(1-q)\bigl(1-q^3\bigr)^2} -\fr{q^3}{(1-q)\bigl(1-q^3\bigr)\bigl(1-q^5\bigr)} =\fr{-q^3\bigl(1-q^3-q^4-q^5\bigr)}{\bigl(1-q^3\bigr)^2 \bigl(1-q^5\bigr)}.
\end{align*}
This proves~\eqref{B2}.

Now assume that $m$ is a positive integer such that $m\geq 2$. Then
by using~\eqref{basic} and~\eqref{Heine-1}, we have
\begin{align*}
\sum_{n\geq 0} B(2m,n) q^n
&=\sum_{n\geq 0} \fr{q^{2mn} \bigl(q^{2n+2},q^{2n+4};q^2\bigr)_\infty}{\bigl(q^{2n-3},q^{2n+3};q^2\bigr)_\infty} \\
&=\fr{\bigl(q^2,q^4;q^2\bigr)_\infty}{\bigl(q^{-3},q^3;q^2\bigr)_\infty}
\pFq{2}{1}{q^{-3},q^3}{q^{4}}{q^2, q^{2m}} \\
&=\fr{\bigl(q^2,q^4;q^2\bigr)_\infty}{\bigl(q^{-3},q^3;q^2\bigr)_\infty}\fr{\bigl(q^3,q^{2m-3};q^2\bigr)_\infty}{\bigl(q^4,q^{2m};q^2\bigr)_\infty}
\pFq{2}{1}{q,q^{2m}}{q^{2m-3}}{q^2, q^3} \\
&=\fr{\bigl(q^2,q^{2m-3};q^2\bigr)_\infty}{\bigl(q^{-3},q^{2m};q^2\bigr)_\infty}\fr{\bigl(q,q^{2m};q^2\bigr)_\infty}{\bigl(q^2,q^{2m-3};q^2\bigr)_\infty}
\sum_{n\geq 0}\fr{q^{3n} \bigl(q^{2n+2}, q^{2n+2m-3};q^2\bigr)_\infty}{\bigl(q^{2n+1},q^{2n+2m};q^2\bigr)_\infty} \\
&=\fr{q^4}{(1-q)\bigl(1-q^3\bigr)}\sum_{n\geq 0}\fr{q^{3n} \bigl(q^{2n+2};q^2\bigr)_{m-1}}{\bigl(q^{2n+1};q^2\bigr)_{m-2}},
\end{align*}
which yields~\eqref{B2m}.
\begin{Lemma}\label{lem B'}
For any positive integer, we have
\[
\sum_{n\geq 0} B'(2m,n) q^n
=\fr{-1}{1-q}\sum_{n\geq 0}\fr{q^{5n+2m+1}\bigl(q^{2n+2};q^2\bigr)_{m-1}}{\bigl(q^{2n+1};q^2\bigr)_{m-1}}.
\]
\end{Lemma}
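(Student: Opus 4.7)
The plan is to follow the same strategy as the proof of Lemma~\ref{lem A'}: convert the defining series of $B'(2m,n)$ into a single $_2\phi_1$ in base $q^2$ and apply one Heine transformation.

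First, using~\eqref{basic} to pull the $n=0$ pieces out of each of the four infinite products in the summand of $\sum_{n\ge 0}B'(2m,n)q^n$, I would recognise the series as
\[
\sum_{n\ge 0}B'(2m,n)q^n = q^{2m}\,\frac{(q^2,q^6;q^2)_\infty}{(q^{-1},q^5;q^2)_\infty}\,\pFq{2}{1}{q^{-1},q^5}{q^6}{q^2,q^{2m}}.
\]

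Next I would apply Heine's first transformation~\eqref{Heine-1} (with $q\to q^2$) to the parameters $(a,b,c,z)=(q^{-1},q^5,q^6,q^{2m})$. The choice $b=q^5$ is dictated by the shape of the target: after Heine the new summation argument is $b=q^5$, so every term in the transformed series carries the desired factor $q^{5n}$. Explicitly, Heine~\eqref{Heine-1} yields
\[
\pFq{2}{1}{q^{-1},q^5}{q^6}{q^2,q^{2m}} = \frac{(q^5,q^{2m-1};q^2)_\infty}{(q^6,q^{2m};q^2)_\infty}\,\pFq{2}{1}{q,q^{2m}}{q^{2m-1}}{q^2,q^5}.
\]
Then I would reinsert the $n$-dependence back into the new $_2\phi_1$ via~\eqref{basic}, converting the ratio $(q,q^{2m};q^2)_n/(q^2,q^{2m-1};q^2)_n$ into the infinite-product form, which rewrites the sum precisely as $\sum_n q^{5n}(q^{2n+2};q^2)_\infty(q^{2n+2m-1};q^2)_\infty/\bigl((q^{2n+1};q^2)_\infty(q^{2n+2m};q^2)_\infty\bigr)$; this is exactly the sum of ratios $(q^{2n+2};q^2)_{m-1}/(q^{2n+1};q^2)_{m-1}$ appearing on the right-hand side of the lemma, up to an overall scalar.

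The remaining work, and the only mildly delicate step, is the scalar bookkeeping. After cancelling $(q^5,q^6;q^2)_\infty$ between the Heine factor and the original prefactor, what is left is
\[
q^{2m}\,\frac{(q^2;q^2)_\infty(q^{2m-1};q^2)_\infty}{(q^{-1};q^2)_\infty(q^{2m};q^2)_\infty},
\]
in which the non-standard factor $(q^{-1};q^2)_\infty$ has to be normalised. Using $(q^{-1};q^2)_\infty=-q^{-1}(1-q)(q;q^2)_\infty$ converts this prefactor into $-q^{2m+1}/(1-q)$ times $(q^2,q^{2m-1};q^2)_\infty/(q,q^{2m};q^2)_\infty$, which is the factor that matches the right-hand side. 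Combining this with the infinite-product identification of the $n$th summand yields the claim uniformly for every positive integer $m\ge 1$; as a sanity check, for $m=1$ the inner $_2\phi_1$ collapses (since $(q^2;q^2)_n/(q^2;q^2)_n=1$) to the geometric sum $1/(1-q^5)$, giving the expected value $-q^3/\bigl((1-q)(1-q^5)\bigr)$.
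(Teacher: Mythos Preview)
Your proposal is correct and follows essentially the same route as the paper's own proof: rewrite the defining series as $q^{2m}\dfrac{(q^2,q^6;q^2)_\infty}{(q^{-1},q^5;q^2)_\infty}\,{}_2\phi_1\bigl[q^{-1},q^5;q^6;q^2,q^{2m}\bigr]$, apply Heine's transformation~\eqref{Heine-1} with $(a,b,c,z)=(q^{-1},q^5,q^6,q^{2m})$, and then undo the Pochhammer ratios via~\eqref{basic} to recover the finite products $(q^{2n+2};q^2)_{m-1}/(q^{2n+1};q^2)_{m-1}$; your scalar bookkeeping via $(q^{-1};q^2)_\infty=-q^{-1}(1-q)(q;q^2)_\infty$ is exactly what the paper does implicitly, and the $m=1$ sanity check is a nice addition.
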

\begin{proof}
By~\eqref{basic} and~\eqref{Heine-1} applied to $(a,b,c,z)=\bigl(q^{-1},q^5,q^6,q^{2m}\bigr)$, we find
\begin{align*}
\sum_{n\geq 0} B'(2m,n) q^n
&=q^{2m} \fr{\bigl(q^2,q^6;q^2\bigr)_\infty}{\bigl(q^{-1},q^5;q^2\bigr)_\infty}
\pFq{2}{1}{q^{-1},q^5}{q^6}{q^2, q^{2m}} \\
&=q^{2m} \fr{\bigl(q^2,q^6;q^2\bigr)_\infty}{\bigl(q^{-1},q^5;q^2\bigr)_\infty}
\fr{\bigl(q^5,q^{2m-1};q^2\bigr)_\infty}{\bigl(q^6,q^{2m};q^2\bigr)_\infty}
\pFq{2}{1}{q,q^{2m}}{q^{2m-1}}{q^2, q^5} \\
&=q^{2m} \fr{\bigl(q^2,q^{2m-1};q^2\bigr)_\infty}{\bigl(q^{-1},q^{2m};q^2\bigr)_\infty}
\sum_{n\geq 0}\fr{q^{5n} \bigl(q,q^{2m};q^2\bigr)_n}{\bigl(q^2,q^{2m-1};q^2\bigr)_n} \\
&=\fr{-1}{1-q}\sum_{n\geq 0} \fr{q^{5n+2m+1} \bigl(q^{2n+2},q^{2n+2m-1};q^2\bigr)_\infty}{\bigl(q^{2n+1},q^{2n+2m};q^2\bigr)_\infty} \\
&=\fr{-1}{1-q}\sum_{n\geq 0}\fr{q^{5n+2m+1}\bigl(q^{2n+2};q^2\bigr)_{m-1}}{\bigl(q^{2n+1};q^2\bigr)_{m-1}},
\end{align*}
which is the desired identity.
\end{proof}
\begin{Lemma}\label{lem B'-B}
There holds
$
\sum_{n\geq 0} B'(m,n) q^n
=\sum_{n\geq 0} ( B(m,n)-B(m+2,n) ) q^n$.
\end{Lemma}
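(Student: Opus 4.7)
The plan is to mirror exactly the proof of Lemma~\ref{lem A'-A}, only replacing the parameters that describe $A$ by those that describe $B$. First, rewriting
\[
\sum_{n\geq 0} B(m,n) q^n
=\fr{\bigl(q^2,q^4;q^2\bigr)_\infty}{\bigl(q^{-3},q^3;q^2\bigr)_\infty}
\pFq{2}{1}{q^{-3},q^3}{q^{4}}{q^2, q^{m}}
\]
as in the proof of Lemma~\ref{lem B} (only now we keep the upper variable $q^m$ instead of specializing it to $q^{2m}$), and doing the same with $m$ replaced by $m+2$, I would form the difference. By~\eqref{Kratt-21} applied with $q\to q^2$ and $(a,b,c,z)=\bigl(q^{-3},q^3,q^4,q^m\bigr)$, the bracketed $_2\phi_1$ difference becomes
\[
q^m\,\fr{\bigl(1-q^{-3}\bigr)\bigl(1-q^3\bigr)}{1-q^4}\,
\pFq{2}{1}{q^{-1},q^5}{q^{6}}{q^2, q^{m}}.
\]

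Second, I would absorb the elementary factors into the infinite-product prefactor. Using $(q^{-3};q^2)_\infty=\bigl(1-q^{-3}\bigr)(q^{-1};q^2)_\infty$, $(q^3;q^2)_\infty=\bigl(1-q^3\bigr)(q^5;q^2)_\infty$, and $(q^4;q^2)_\infty=\bigl(1-q^4\bigr)(q^6;q^2)_\infty$, the prefactor collapses to
\[
q^m\,\fr{\bigl(q^2,q^6;q^2\bigr)_\infty}{\bigl(q^{-1},q^5;q^2\bigr)_\infty}.
\]
Expanding the remaining $_2\phi_1$ as a series and rewriting by~\eqref{basic}, I obtain
\[
q^m\,\fr{\bigl(q^2,q^6;q^2\bigr)_\infty}{\bigl(q^{-1},q^5;q^2\bigr)_\infty}\sum_{n\geq 0}\fr{\bigl(q^{-1},q^5;q^2\bigr)_n}{\bigl(q^2,q^6;q^2\bigr)_n} q^{mn}
=\sum_{n\geq 0}\fr{q^{m(n+1)}\bigl(q^{2n+2};q^2\bigr)_\infty \bigl(q^{2n+6};q^2\bigr)_\infty}{\bigl(q^{2n-1};q^2\bigr)_\infty\bigl(q^{2n+5};q^2\bigr)_\infty},
\]
which is precisely $\sum_{n\geq 0} B'(m,n) q^n$ by Definition~\ref{def sequences}.

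The only real risk in the argument is the bookkeeping in the second step: one must verify that the extra factors $(1-q^{-3})$, $(1-q^3)$ and $(1-q^4)$ coming from the contiguous relation exactly convert the denominator $(q^{-3},q^3;q^2)_\infty$ and the numerator $(q^4;q^2)_\infty$ into $(q^{-1},q^5;q^2)_\infty$ and $(q^6;q^2)_\infty$. Once that identity of prefactors is checked, the remaining manipulation is identical to the final display in the proof of Lemma~\ref{lem A'-A}, so no additional ideas are required.
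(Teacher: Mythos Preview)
Your proposal is correct and follows essentially the same route as the paper's proof: both rewrite $\sum_{n\geq 0}B(m,n)q^n$ as a multiple of a ${}_2\phi_1$, apply the contiguous relation~\eqref{Kratt-21} with $q\to q^2$ and $(a,b,c,z)=(q^{-3},q^3,q^4,q^m)$, and then absorb the resulting elementary factors into the infinite-product prefactor to recover the definition of $B'(m,n)$. Your added verification of the prefactor simplification is accurate and matches the paper's display.
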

\begin{proof}
By~\eqref{Kratt-21} applied with $q\to q^2$ and $(a,b,c,z)=\bigl(q^{-3},q^3,q^4,q^m\bigr)$, we get
\begin{gather*}
\sum_{n\geq 0} ( B(m,n)-B(m+2,n) ) q^n\\
\qquad= \fr{\bigl(q^2,q^4;q^2\bigr)_\infty}{\bigl(q^{-3},q^3;q^2\bigr)_\infty}
\left( \pFq{2}{1}{q^{-3},q^3}{q^4}{q^2, q^m} - \pFq{2}{1}{q^{-3},^3}{q^4}{q^2, q^{m+2}} \right) \\
\qquad=\fr{\bigl(q^2,q^4;q^2\bigr)_\infty}{\bigl(q^{-3},q^3;q^2\bigr)_\infty} q^m \fr{\bigl(1-q^{-3}\bigr)\bigl(1-q^3\bigr)}{1-q^4}
\pFq{2}{1}{q^{-1},q^5}{q^6}{q^2, q^{m}} \\
\qquad=\fr{\bigl(q^2,q^6;q^2\bigr)_\infty}{\bigl(q^{-1},q^5;q^2\bigr)_\infty} \sum_{n\geq 0}\fr{q^{m(n+1)} \bigl(q^{-1},q^5;q^2\bigr)_n}{\bigl(q^2,q^6;q^2\bigr)_n} =\sum_{n\geq 0}\fr{q^{m(n+1)}\bigl(q^{2n+2};q^2\bigr)_\infty \bigl(q^{2n+6};q^2\bigr)_\infty}{\bigl(q^{2n-1};q^2\bigr)_\infty \bigl(q^{2n+5};q^2\bigr)_\infty},
\end{gather*}
as desired.
\end{proof}

\section[Proof of Theorem 2.2 and Corollary 2.3]{Proof of Theorem~\ref{thm double-1} and Corollary~\ref{cor positive-1}}\label{sec proof double-1}
\begin{proof}[Proof of Theorem~\ref{thm double-1}]
By virtue of Lemma~\ref{lem A'-A}, we obtain
\begin{align}
\sum_{n\geq 0} A(2m+2,n)q^n
&= \sum_{n\geq 0} A(2m,n)q^n - \sum_{n\geq 0} A'(2m,n)q^n \nonumber \\
&=\sum_{n\geq 0} A(2m-2,n)q^n - \bigg(\sum_{n\geq 0} A'(2m-2,n)q^n + \sum_{n\geq 0} A'(2m,n)q^n \bigg) \nonumber\\
& \quad\cdots \nonumber \\
&=\sum_{n\geq 0} A(2,n)q^n -\sum_{k=1}^m \sum_{n\geq 0}A'(2k,n)q^n \label{help-1 double-1}.
\end{align}
Now combine~\eqref{help-1 double-1} with Lemma~\ref{lem A} and~\eqref{A'-2} to deduce
\[
\fr{-1}{(1-q)^2}\sum_{n\geq 0} \fr{q^{n+1} \bigl(q^{2n+2};q^2\bigr)_m}{\bigl(q^{2n+3};q^2\bigr)_{m-1}}
=\fr{-q\bigl(1+q^2\bigr)}{(1-q)^2 \bigl(1-q^3\bigr)}-\fr{1}{1-q}\sum_{n\geq 0}\sum_{k=1}^m \fr{q^{2k+3n} \bigl(q^{2n+2};q^2\bigr)_{k-1}}{\bigl(q^{2n+3};q^2\bigr)_{k-1}},
\]
which by letting $m\to \infty$ yields
\begin{equation}
\fr{-q}{(1-q)^2}\sum_{n\geq 0} \fr{q^{n} \bigl(q^{2n+2};q^2\bigr)_\infty}{\bigl(q^{2n+3};q^2\bigr)_\infty}
=\fr{-q\bigl(1+q^2\bigr)}{(1-q)^2 \bigl(1-q^3\bigr)}-\fr{1}{1-q} \sum_{n,k \geq 0}\fr{q^{2k+5n+2} \bigl(q^{2n+2};q^2\bigr)_{k}}{\bigl(q^{2n+1};q^2\bigr)_{k}}.\label{help id-1}
\end{equation}
In addition, by~\eqref{q-binom}
and simplification we have
\begin{align}
\sum_{n\geq 0} \fr{q^{n} \bigl(q^{2n+2};q^2\bigr)_\infty}{\bigl(q^{2n+3};q^2\bigr)_\infty}
&=\fr{\bigl(q^2;q^2\bigr)_\infty}{\bigl(q^3;q^2\bigr)_\infty} \sum_{n\geq 0} \fr{\bigl(q^3;q^2\bigr)_n q^n}{\bigl(q^2;q^2\bigr)_n} =\fr{\bigl(q^2;q^2\bigr)_\infty}{\bigl(q^3;q^2\bigr)_\infty} \fr{\bigl(q^4;q^2\bigr)_\infty}{\bigl(q;q^2\bigr)_\infty} \nonumber \\
&= \fr{1-q}{1-q^2} \fr{\bigl(q^2;q^2\bigr)_\infty^2}{\bigl(q;q^2\bigr)_\infty^2}.\label{help-2 double-1}
\end{align}
Finally, combine~\eqref{help-2 double-1} with~\eqref{help id-1} and rearrange to obtain
\[
\sum_{n,k \geq 0}\fr{q^{2k+3n+2} \bigl(q^{2n+2};q^2\bigr)_{k}}{\bigl(q^{2n+3};q^2\bigr)_{k}}
=\fr{1}{1-q^2} \fr{\bigl(q^2;q^2\bigr)_\infty^2}{\bigl(q;q^2\bigr)_\infty^2} -\fr{1+q^2}{(1-q)\bigl(1-q^3\bigr)},
\]
which gives the desired formula.
\end{proof}

\begin{proof}[Proof of Corollary~\ref{cor positive-1}]
An application of~\eqref{psi-sum} with $q\to q^4$ and $(a,b,z)=\bigl(q,q^5,q\bigr)$ yields after simplification
\[
\sum_{n=-\infty}^{\infty}\fr{(q;q^4)_n q^n}{(q^5;q^4)_n}
= (1-q)\fr{\bigl(q^2;q^2\bigr)_\infty^2}{\bigl(q;q^2\bigr)_\infty^2},
\]
that is,
$
\sum_{n=-\infty}^{\infty}\fr{q^n}{1-q^{4n+1}} = \fr{(q^2;q^2)_\infty^2}{(q;q^2)_\infty^2}$,
or equivalently,
\begin{equation}
\sum_{n=0}^\infty \left( \fr{q^n}{1-q^{4n+1}}-\fr{q^{3n+2}}{1-q^{4n+3}}\right)
= \fr{\bigl(q^2;q^2\bigr)_\infty^2}{\bigl(q;q^2\bigr)_\infty^2}.\label{help-1 positive-1}
\end{equation}
Thus, by using Theorem~\ref{thm double-1} and~\eqref{help-1 positive-1} we obtain
\begin{align*}
\sum_{n\geq 0}F_1(n) q^n
={}&\fr{-\bigl(1+q^2\bigr)}{(1-q)\bigl(1-q^3\bigr)} + \fr{1}{1-q^2} \fr{\bigl(q^2;q^2\bigr)_\infty^2}{\bigl(q;q^2\bigr)_\infty^2} \\
={}&\fr{-\bigl(1+q^2\bigr)}{(1-q)\bigl(1-q^3\bigr)} +\fr{1}{1-q^2}\left(\fr{1}{1-q}-\fr{q^2}{1-q^3} \right)\\
&
+\sum_{n\geq 1}\left( \fr{q^n}{1-q^{4n+1}}-\fr{q^{3n+2}}{1-q^{4n+3}}\right)\fr{1}{1-q^2} \\
={}&\fr{-q^2}{(1-q)\bigl(1-q^3\bigr)}
+\sum_{n\geq 1}\left(\fr{q^n\bigl(1-q^{2n}\bigr)}{\bigl(1-q^{4n+1}\bigr)\bigl(1-q^2\bigr)}+\fr{q^{3n}}{\bigl(1-q^{4n+1}\bigr)\bigl(1-q^{4n+3}\bigr)}\right) \\
&\succeq \fr{-q^2}{(1-q)\bigl(1-q^3\bigr)} + \sum_{n\geq 1}\fr{q^n\bigl(1-q^{2n}\bigr)}{1-q^2} + \sum_{n\geq 1}q^{3n} \\
={}&\fr{-q^2}{(1-q)\bigl(1-q^3\bigr)} + \fr{q}{(1-q)\bigl(1-q^2\bigr)}-\fr{q^3}{\bigl(1-q^2\bigr)\bigl(1-q^3\bigr)}+\fr{q^3}{1-q^3} \\
={}&\fr{q+q^3}{1-q^3} \succeq 0.
\end{align*}
This completes the proof.
\end{proof}
\section[Proof of Theorem 2.5 and Corollary 2.6]{Proof of Theorem~\ref{thm double-2} and Corollary~\ref{cor positive-2}}\label{sec proof double-2}
\begin{proof}[Proof of Theorem~\ref{thm double-2}]
Combining~\eqref{help-1 double-1} with Lemma~\ref{lem A} and~\eqref{A'-1}, we derive
\[
\fr{1}{(1-q)^2}\sum_{n\geq 0}\fr{q^{n+1}\bigl(q^{2n+2};q^2\bigr)_m}{\bigl(q^{2n+3};q^2\bigr)_{m-1}}
=\fr{q\bigl(1+q^2\bigr)}{(1-q)^2 \bigl(1-q^3\bigr)}
+ \fr{1}{1-q^3}\sum_{n\geq 0}\sum_{k=1}^m \fr{q^{2k+n}\bigl(q^{2n+2};q^2\bigr)_{k-1}}{\bigl(q^{2n+5};q^2\bigr)_{k-1}},
\]
which by letting $m\to \infty$ implies
\[
\fr{1}{(1-q)^2}\sum_{n\geq 0}\fr{q^{n+1}\bigl(q^{2n+2};q^2\bigr)_\infty}{\bigl(q^{2n+3};q^2\bigr)_{\infty}}
=\fr{q\bigl(1+q^2\bigr)}{(1-q)^2 \bigl(1-q^3\bigr)}
+ \fr{1}{1-q^3}\sum_{n,k\geq 0}\fr{q^{2k+2+n}\bigl(q^{2n+2};q^2\bigr)_{k}}{\bigl(q^{2n+5};q^2\bigr)_{k}},
\]
or equivalently,
\[
\sum_{n,k\geq 0}\fr{q^{2k+2+n}\bigl(q^{2n+2};q^2\bigr)_{k}}{\bigl(q^{2n+5};q^2\bigr)_{k}}
=\fr{1-q^3}{(1-q)^2}\sum_{n\geq 0}\fr{q^{n+1}\bigl(q^{2n+2};q^2\bigr)_\infty}{\bigl(q^{2n+3};q^2\bigr)_{\infty}}
-\fr{q\bigl(1+q^2\bigr)}{(1-q)^2}.
\]
Now using~\eqref{help-2 double-1}, we derive
\[
\sum_{n,k\geq 0}\fr{q^{2k+2+n}\bigl(q^{2n+2};q^2\bigr)_{k}}{\bigl(q^{2n+5};q^2\bigr)_{k}}
=\fr{q\bigl(1-q^3\bigr)}{(1-q)^2}\fr{1-q}{1-q^2}\fr{\bigl(q^2;q^2\bigr)_\infty^2}{\bigl(q;;q^2\bigr)_\infty^2}
-\fr{q\bigl(1+q^2\bigr)}{(1-q)^2},
\]
which clearly is equivalent to the desired formula.
\end{proof}

\begin{proof}[Proof of Corollary~\ref{cor positive-2}]
From Theorem~\ref{thm double-2} and~\eqref{help-1 positive-1}, we obtain
\begin{align*}
\sum_{n\geq 0}F_2(n) q^n ={}&
\fr{-q\bigl(1+q^2\bigr)}{(1-q)^2} + \fr{q\bigl(1-q^3\bigr)}{(1-q)\bigl(1-q^2\bigr)}\fr{\bigl(q^2;q^2\bigr)_\infty^2}{\bigl(q;q^2\bigr)_\infty^2} \\
={}&\fr{-q\bigl(1+q^2\bigr)}{(1-q)^2}
+\fr{q\bigl(1-q^3\bigr)}{(1-q)\bigl(1-q^2\bigr)} \left( \fr{1-q^2}{(1-q)\bigl(1-q^3\bigr)}\right. \\
& \left.+\sum_{n\geq 1}\left(\fr{q^n}{1-q^{4n+1}}-\fr{q^{3n+2}}{1-q^{4n+3}} \right) \right) \\
={}&\fr{-q^3}{(1-q)^2} + \sum_{n\geq 1}\left(\fr{q^n\bigl(1-q^{2n}\bigr)}{\bigl(1-q^{4n+1}\bigr)\bigl(1-q^2\bigr)}+\fr{q^{3n}}{\bigl(1-q^{4n+1}\bigr)\bigl(1-q^{4n+3}\bigr)}\right) \\
\succeq{}& \fr{-q^3}{(1-q)^2}+\sum_{n\geq 1}\left( \fr{q^n\bigl(1-q^{2n}\bigr)}{1-q^2}+q^{3n} \right) \\
={}&\fr{-q^3}{(1-q)^2}+\fr{q}{(1-q)\bigl(1-q^2\bigr)}-\fr{q^3}{\bigl(1-q^2\bigr)\bigl(1-q^3\bigr)} + \fr{q^3}{1-q^3} \\
={}&\fr{-q^3}{(1-q)^2}+\fr{q}{(1-q)\bigl(1-q^2\bigr)} +\fr{q^3-q^4}{(1-q)\bigl(1-q^3\bigr)} \\
={}&\fr{2-2q^4+q^5}{(1-q)\bigl(1-q^3\bigr)}
=\fr{q\bigl(1-q^3\bigr)^2 +q^5-q^7}{(1-q)\bigl(1-q^3\bigr)} \\
={}&q\bigl(1+q+q^2\bigr) +\fr{q^5(1+q)}{1-q^3} \succeq 0.
\end{align*}
This gives the desired result.
\end{proof}
\section[Proof of Theorem~2.8 and Corollary 2.9]{Proof of Theorem~\ref{thm double-3} and Corollary~\ref{cor positive-3}}\label{sec proof double-3}
\begin{proof}[Proof of Theorem~\ref{thm double-3}]
By Lemma~\ref{lem B'-B}, we get
\begin{align}
\sum_{n\geq 0} B(2m+2,n)q^n
&= \sum_{n\geq 0} B(2m,n)q^n - \sum_{n\geq 0} B'(2m,n)q^n \nonumber \\
&=\sum_{n\geq 0} B(2m-2,n)q^n - \bigg(\sum_{n\geq 0} B'(2m-2,n)q^n + \sum_{n\geq 0} B'(2m,n)q^n \bigg) \nonumber\\
& \quad\cdots \nonumber \\
&=\sum_{n\geq 0} B(2,n)q^n -\sum_{k=1}^m \sum_{n\geq 0}B'(2k,n)q^n \label{help-1 double-3}.
\end{align}
Now use~\eqref{help-1 double-3}, Lemmas~\ref{lem B} and~\ref{lem B'} to deduce
\begin{gather*}
\fr{1}{(1-q)\bigl(1-q^3\bigr)}\sum_{n\geq 0} \fr{q^{3n+4} \bigl(q^{2n+2};q^2\bigr)_m}{\bigl(q^{2n+1};q^2\bigr)_{m-1}}
\\
\qquad=\fr{q^3\bigl(-1+q^3+q^4+q^5\bigr)}{\bigl(1-q^3\bigr)^2 \bigl(1-q^5\bigr)}+\fr{q}{1-q}\sum_{n\geq 0}\sum_{k=1}^m \fr{q^{2k+5n} \bigl(q^{2n+2};q^2\bigr)_{k-1}}{\bigl(q^{2n+1};q^2\bigr)_{k-1}},
\end{gather*}
which by letting $m\to \infty$ and using~\eqref{q-binom} gives
\begin{align*}
\sum_{n,k \geq 0}\fr{q^{2k+3n+2} \bigl(q^{2n+2};q^2\bigr)_{k}}{\bigl(q^{2n+3};q^2\bigr)_{k}}
&=\fr{q^3}{1-q^3}\sum_{n\geq 0} \fr{q^{3n} \bigl(q^{2n+2};q^2\bigr)_\infty}{\bigl(q^{2n+1};q^2\bigr)_\infty}
-\fr{q^2(1-q)\bigl(-1+q^3+q^4+q^5\bigr)}{\bigl(1-q^3\bigr)^2 \bigl(1-q^5\bigr)} \\
&=\fr{q^3}{1-q^3} \fr{\bigl(q^2;q^2\bigr)_\infty}{\bigl(q;q^2\bigr)_\infty}\fr{\bigl(q^4;q^2\bigr)_\infty}{\bigl(q^3;q^2\bigr)_\infty}
-\fr{q^2(1-q)\bigl(-1+q^3+q^4+q^5\bigr)}{\bigl(1-q^3\bigr)^2 \bigl(1-q^5\bigr)} \\
&=\fr{q^3(1-q)}{\bigl(1-q^2\bigr)\bigl(1-q^3\bigr)} \fr{\bigl(q^2;q^2\bigr)_\infty^2}{\bigl(q;q^2\bigr)_\infty^2}
-\fr{q^2(1-q)\bigl(-1+q^3+q^4+q^5\bigr)}{\bigl(1-q^3\bigr)^2 \bigl(1-q^5\bigr)} \\
&=\fr{q^3}{(1+q)\bigl(1-q^3\bigr)} \fr{\bigl(q^2;q^2\bigr)_\infty^2}{\bigl(q;q^2\bigr)_\infty^2}
-\fr{q^2(1-q)\bigl(-1+q^3+q^4+q^5\bigr)}{\bigl(1-q^3\bigr)^2 \bigl(1-q^5\bigr)},
\end{align*}
which is the desired identity.
\end{proof}

\begin{proof}[Proof of Corollary~\ref{cor positive-3}]
By virtue of Theorem~\ref{thm double-3} and~\eqref{help-1 positive-1}, we have
\begin{align*}
\sum_{n\geq 0} G(n) q^n
={}&\fr{q^3}{(1+q)\bigl(1-q^3\bigr)} \fr{\bigl(q^2;q^2\bigr)_\infty^2}{\bigl(q;q^2\bigr)_\infty^2}
-\fr{q^2(1-q)\bigl(-1+q^3+q^4+q^5\bigr)}{\bigl(1-q^3\bigr)^2 \bigl(1-q^5\bigr)} \\
={}&\fr{q^3}{(1+q)\bigl(1-q^3\bigr)} \left(\fr{1}{1-q}-\fr{q^2}{1-q^3}\right)\\
&+\fr{q^3}{(1+q)\bigl(1-q^3\bigr)}\sum_{n\geq 1}\left(\fr{q^n}{1-q^{4n+1}}-\fr{q^{3n+2}}{1-q^{4n+3}}\right) \\
& -\fr{q^2(1-q)\bigl(-1+q^3+q^4+q^5\bigr)}{\bigl(1-q^3\bigr)^2 \bigl(1-q^5\bigr)} \\
={}&\fr{q^2}{\bigl(1-q^3\bigr)\bigl(1-q^5\bigr)} + \sum_{n\geq 1}\fr{q^{n+3}-q^{3n+5}-q^{5n+6}+q^{7n+6}}{(1+q)\bigl(1-q^3\bigr)\bigl(1-q^{4n+1}\bigr)\bigl(1-q^{4n+3}\bigr)} \\
={}&\fr{q^2}{\bigl(1-q^3\bigr)\bigl(1-q^5\bigr)} + \sum_{n\geq 1}\fr{q^{n+3}\bigl(1-q^{2n+2}\bigr)-q^{5n+6}\bigl(1-q^{2n}\bigr)}{(1+q)\bigl(1-q^3\bigr)\bigl(1-q^{4n+1}\bigr)\bigl(1-q^{4n+3}\bigr)} \\
\succeq{}& \fr{q^2}{\bigl(1-q^3\bigr)\bigl(1-q^5\bigr)}+ \sum_{n\geq 1}\fr{q^{n+3}\bigl(1-q^{2n+2}\bigr)}{(1+q)\bigl(1-q^3\bigr)}
-\sum_{n\geq 1}\fr{q^{5n+6}\bigl(1-q^{2n}\bigr)}{(1+q)\bigl(1-q^3\bigr)} \\
={}&\fr{q^2}{\bigl(1-q^3\bigr)\bigl(1-q^5\bigr)}+ \fr{q^3}{(1+q)\bigl(1-q^3\bigr)}\left(\fr{1}{1-q}-\fr{q^2}{1-q^3}\right) \\
& +\fr{q^6}{(1+q)\bigl(1-q^3\bigr)}\left(\fr{-1}{1-q^5}+\fr{1}{1-q^7}\right) \\
={}&\fr{q^2}{\bigl(1-q^3\bigr)\bigl(1-q^5\bigr)}+ \fr{q^3}{\bigl(1-q^3\bigr)^2}-\fr{q^{11}(1-q)}{\bigl(1-q^3\bigr)\bigl(1-q^5\bigr)\bigl(1-q^7\bigr)} \\
\succeq{}& \fr{q^2}{\bigl(1-q^3\bigr)\bigl(1-q^5\bigr)}-\fr{q^{11}(1-q)}{\bigl(1-q^3\bigr)\bigl(1-q^5\bigr)\bigl(1-q^7\bigr)} \\
={}&\fr{q^2-q^9-q^{11}+q^{12}}{\bigl(1-q^3\bigr)\bigl(1-q^5\bigr)\bigl(1-q^7\bigr)}
=\fr{q^2\bigl(1-q^3\bigr)\bigl(1-q^7\bigr)+q^5-q^{11}}{\bigl(1-q^3\bigr)\bigl(1-q^5\bigr)\bigl(1-q^7\bigr)} \\
={}&\fr{q^2}{1-q^5}+\fr{q^5\bigl(1+q^3\bigr)}{\bigl(1-q^5\bigr)\bigl(1-q^7\bigr)}
=\fr{q^2}{1-q^5}+\fr{q^5\bigl(1-q^6\bigr)}{\bigl(1-q^3\bigr)\bigl(1-q^5\bigr)\bigl(1-q^7\bigr)} \succeq 0,
\end{align*}
as desired.
\end{proof}

\subsection*{Acknowledgements}
First author partially supported by Simons Foundation Grant 633284.
The authors are grateful to two anonymous referees for their valuable comments and interesting suggestions which have improved the presentation and quality of the paper.

\pdfbookmark[1]{References}{ref}
\LastPageEnding

\end{document}